\documentclass[aoas]{imsart}

\RequirePackage{amsthm,amsmath,amsfonts,amssymb}
\RequirePackage[authoryear]{natbib}
\RequirePackage[colorlinks,citecolor=blue,urlcolor=blue]{hyperref}
\RequirePackage{graphicx}

\usepackage{multicol,tikz-cd,graphicx, hyperref,xcolor,pgf-pie,placeins,wasysym,colortbl,mathtools,xspace,multirow,nicefrac}

\startlocaldefs
\theoremstyle{plain}

\newtheorem{theorem}{Theorem}[section]
\newtheorem{lemma}[theorem]{Lemma}
\newtheorem{proposition}[theorem]{Proposition}
\newtheorem{question}{Question}
\theoremstyle{remark}

\newtheorem{remark}[theorem]{Remark}
\newtheorem*{example}{Example}


\definecolor{carnationpink}{rgb}{1.0, 0.65, 0.79}
\definecolor{chromeyellow}{rgb}{1.0, 0.65, 0.0}
\definecolor{classicrose}{rgb}{0.98, 0.8, 0.91}
\definecolor{maize}{rgb}{0.98, 0.93, 0.37}
\newcommand{\hcell}{\cellcolor{chromeyellow}}

\newcommand{\E}{\mathbb{E}}
\renewcommand{\P}{\mathbb{P}}
\newcommand{\D}{\mathcal{D}}
\newcommand{\Redist}{{\sf Redist}\xspace}

\endlocaldefs

\begin{document}

\begin{frontmatter}
\title{Repetition effects in a Sequential Monte Carlo sampler}

\begin{aug}
\author[A]{\fnms{Sarah}~\snm{Cannon}\ead[label=e1]{scannon@cmc.edu}\orcid{0000-0001-6510-4669}},
\author[B]{\fnms{Daryl}~\snm{DeFord}\ead[label=e2]{daryl.deford@wsu.edu}\orcid{0000-0003-2032-3168}}
\and
\author[C]{\fnms{Moon}~\snm{Duchin}\ead[label=e3]{mduchin@cornell.edu}\orcid{0000-0003-4498-4067}}
\address[A]{Department of Mathematical Sciences,
Claremont McKenna College\printead[presep={,\ }]{e1}}

\address[B]{Department of Mathematics and Statistics,
Washington State University\printead[presep={,\ }]{e2}}

\address[C]{Department of Mathematics and Brooks School of Public Policy,
Cornell University\printead[presep={,\ }]{e3}}
\end{aug}

\begin{abstract}
We investigate the prevalence of sample repetition in a Sequential Monte Carlo (SMC) method recently introduced for political redistricting.
\end{abstract}

\begin{keyword}
\kwd{Sequential Monte Carlo}
\kwd{sampling}
\kwd{graph partitions}
\kwd{redistricting}
\end{keyword}

\end{frontmatter}








\maketitle

\section{Introduction}

In this note, we consider the structure of the  SMC (Sequential Monte Carlo) method developed by McCartan--Imai for sampling partitions of a fixed node-weighted graph $G$ into a given number $k$ of connected subgraphs with nearly equal total  weight \citep{mccartan2023sequential}.
The main application is to redistricting, where the weight is by population, so we will refer to the partition as a {\em plan} and its parts as {\em districts}.
SMC operates by fixing a sample size $S$ and a number of districts $k$, then creating a top generation of partial plans by marking off one connected subset of $G$ with roughly $1/k$ of the total weight in each plan (Figure~\ref{fig:simple}, left).  
In the next generation, $S$ agents sample with replacement from those partial plans according to a weighting function; continuing the progress in the partial plan, the agents then mark off a second subset of about the same size.  This process continues for $k$ generations until the entire graph is marked.  We will summarize the salient combinatorial features of this scheme in a {\em descendancy diagram} $D$ (Figure~\ref{fig:simple}, right), showing only the paths that connect from the bottom layer to the top.  The nodes of $D$ involved in these paths are called {\em active} or {\em activated}.  The final generation consists of $S$ complete partitions.  This is the method developed by the SMC authors to generate ensembles of random districting plans, like the four complete plans at the bottom of Figure~\ref{fig:simple}.

\begin{figure}[bht!]
    \centering
\begin{tikzpicture}
\node at (0,0) {\includegraphics[width=3in]{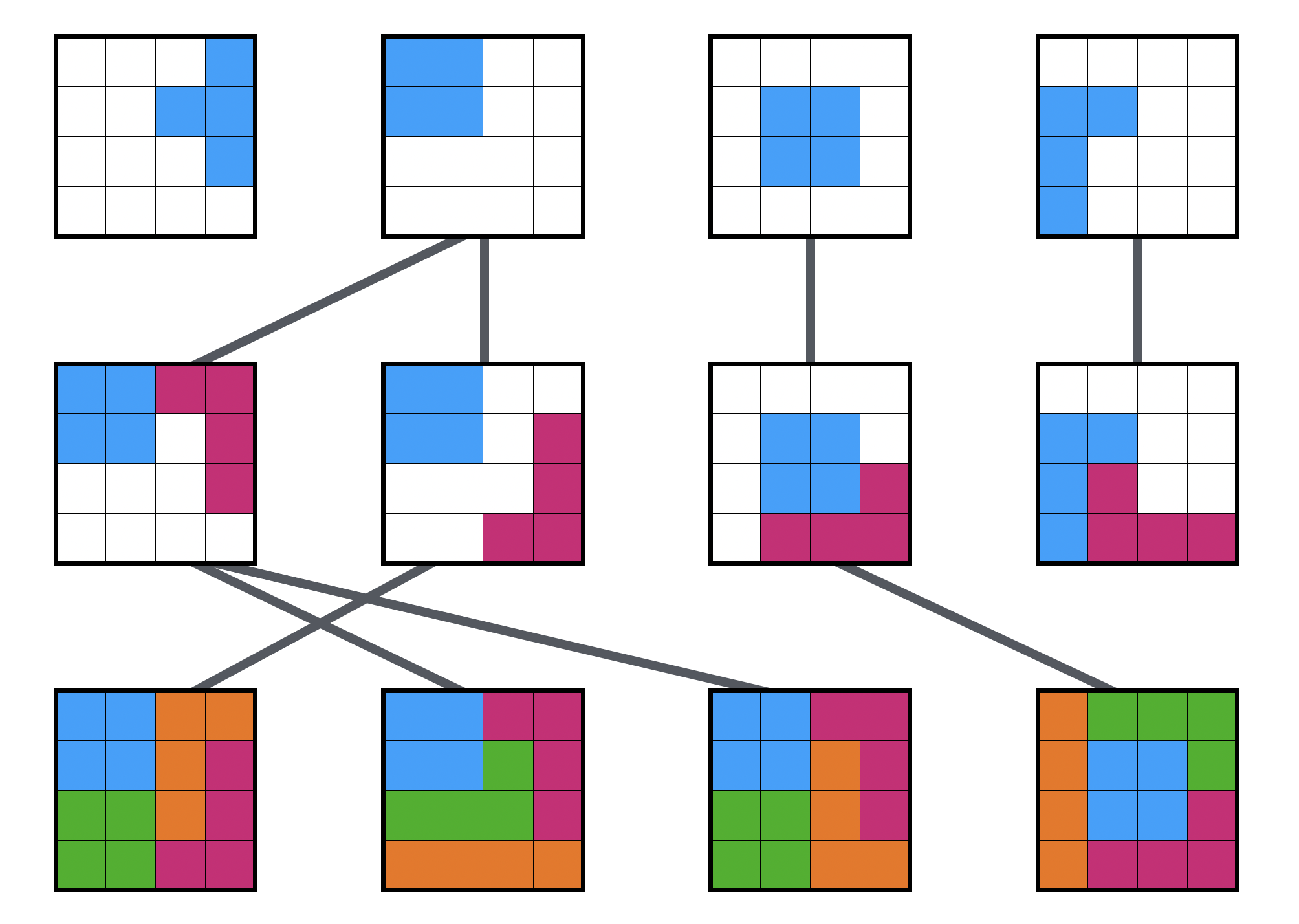}};
\node at (8,0) {\includegraphics[width=2in]{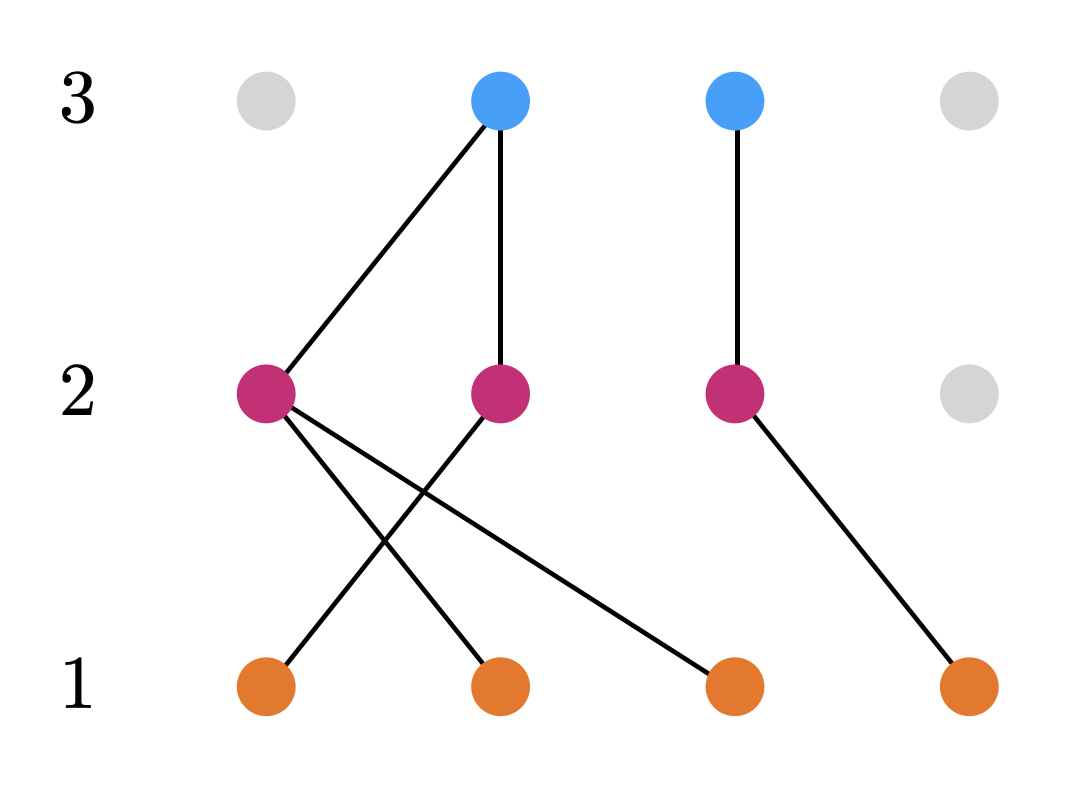}};
\end{tikzpicture}
    \caption{Simple example of partitioning a $4\times 4$ grid into four districts. The adjacency pattern of the grid is the graph $G$, the number of districts is $k=4$, and the size of the sample is $S=4$. At right, the process is abstracted into a {\em descendancy diagram}. The  district marked last (green) does not have a row of the diagram, because it is made up of area left over after the third district (orange) is marked.}
    \label{fig:simple}
\end{figure}

SMC samples face a certain amount of characteristic redundancy.  As the authors note, "because the SMC algorithm involves repeated resampling with replacement, for a finite number of samples it can suffer from particle system collapse \citep{LCL}, where many of the sampled plans share a small number of common districts (which originate as common ancestor particles in the SMC scheme)."  
In practical terms, this means that SMC samples can tend to have certain districts or sets of districts repeated many times across plans, like the blue square at the top left of the grid in Figure~\ref{fig:simple}, which appears in 3/4 of the final sample.
One aim of this note is to study this concentration of ancestry in combinatorial terms.

The second aim is to understand the tradeoffs faced by users in the choice of sampler. Citing McCartan--Imai again: "Like MCMC algorithms, the SMC algorithm generates samples which approximate the target distribution arbitrarily well as the sample size increases."  Our second set of questions explores the convergence guarantees and practical diagnostics available with SMC in relation to the production of various data artifacts used in redistricting analysis, like histograms and boxplots.  (See also \cite{cannon2022spanning} for more comparison of methods.)

Diagnosing strengths and weaknesses of the method has important real-world value.  
This is because SMC for redistricting, as implemented by the authors in the open-source package \Redist \citep{Redist}, is already in widespread use in courts of law. The repetition of districts has been flagged as a limitation that undermines statistical claims about the sample.  For instance, mathematician Kristopher Tapp produced an affidavit in  New York state Senate litigation in which he attempted a replication of another expert's SMC ensemble of 5000 maps, and found that a certain set of 31 districts (covering about half of the state) appeared identically in over 64\% of the sample.\footnote{Tapp further notes that while an ensemble of 5000 63-district maps can have up to 315,000 distinct districts, his replication ensemble had only 12,319, so that each district was repeated an average of 1360 times.  He calls this "a head-turning level of redundancy." \citep{Tapp}}  In  New Mexico state Senate litigation, a defense brief described the SMC method as being "plagued with duplicate simulations"; as a prophylactic measure to protect against this criticism, a defense expert cosmetically altered his SMC sample by perturbing the boundaries of districts so that he could  claim that no districts were duplicated.\footnote{"Dr.~Chen’s implementation of the MCMC version of an SMC algorithm [sic] did not
result in any duplicated maps. [Exh. D, Dep. ST 54:17–55:17 (falsely testifying that Dr.~Chen’s
simulations contain duplicates), 136:6–136:20 (correcting his mistaken testimony)]."  
The opposing expert Sean Trende was no more sophisticated, opining 
in deposition that "Duplicates happen
all the time... So it doesn't bother me, unless it gets extreme to where you end up having, like, 20 maps." \citep{NM-doc}}  It is clear that rigorous attention to the issues around duplication and the consequences for statistical interpretation are greatly needed in the field.

\subsection{Motivating questions}

To understand the SMC algorithm for $k$ districts, we will begin by studying {\em uniform descendancy diagrams} with levels (also called {\it layers} or {\it generations}) labeled $i=1$  to $k-1$ from bottom to top, each of width $S$, in which each node chooses a parent uniformly at random from the generation above. Nodes in the top layer (indexed $k-1$) represent partial plans with a single initial district marked, and those at level $i$ represent districting plans with $k-i$ districts marked. 
At level 1, $k-1$ districts are marked, which determines the $k$th and final district and amounts to specifying a complete plan.
A node in a descendancy diagram is {\it active} if it has a descendant in the bottom layer, and {\em surviving ancestors} are active nodes at the top level (so named because they have descendants in the final population).  Calculations using descendancy diagrams will omit all non-active nodes, because they have no role in the final sample constructed by SMC. 

\begin{figure}[htb!]
    \centering
\begin{tikzpicture}
\node at (0,0) {\includegraphics[width=2.8in]{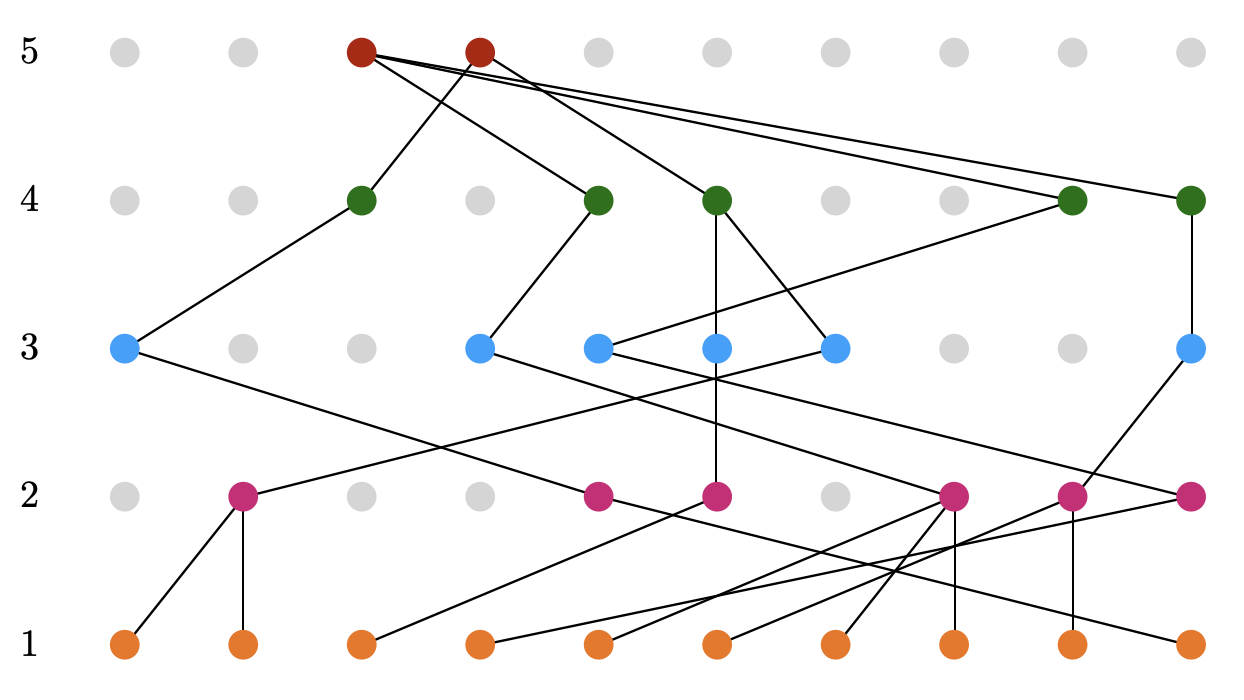}};
\node at (0,-2.5) {$S=10$, $k=6$};
\node at (8,0) {\includegraphics[width=3in]{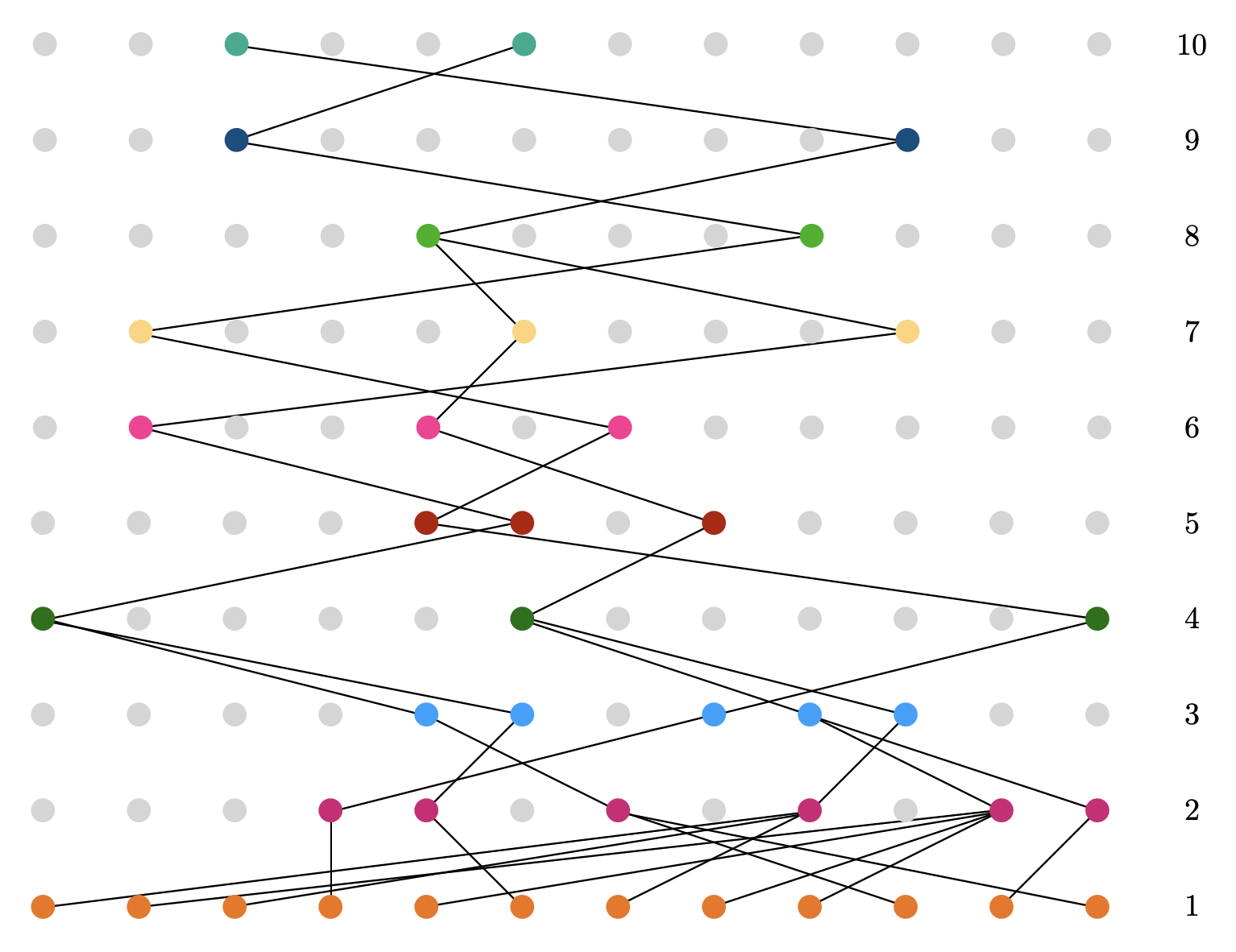}};
\node at (8,-3.8) {$S=12$, $k=11$};

\end{tikzpicture}
\caption{These two figures show structures we call {\em descendancy diagrams}.  The bottom row is labeled as generation 1 in each case, increasing in index with each layer until generation $k-1$ at the top.
Each of these two diagrams has $A(D)=2$, meaning that there are two top-level ancestors from which all  members of the bottom generation are descended.}
    \label{fig:ex1}
\end{figure}

If we write $D\in \D(S,k)$ for a specific diagram of this form, then let $A(D)$ be the number of surviving ancestors  in that diagram, and let $A(S,k)$ be the expected number of surviving ancestors $\E(A(D))$ as $D$ ranges over the uniform distribution on $\D(S,k)$. We consider the following questions.

\begin{figure}[htb!]
    \centering
    \includegraphics[width=3.9in]{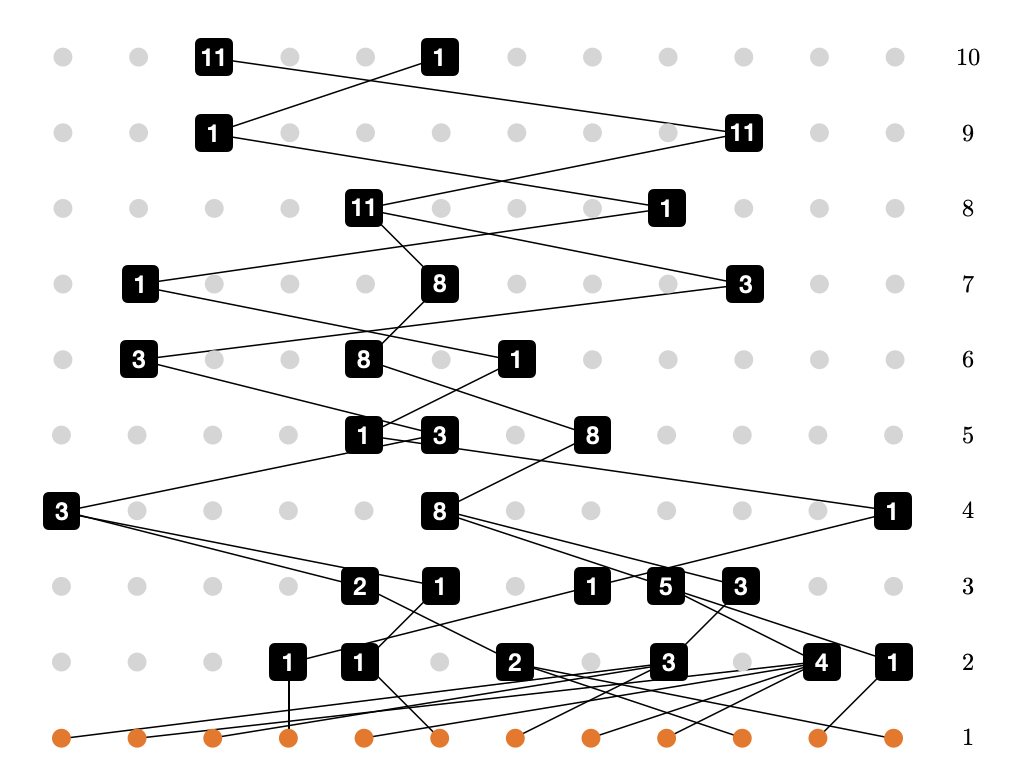}
    \caption{The $S=12,k=11$ example is repeated, now with the diagram nodes decorated by their number of final-generation descendants.  High numbers appearing on low levels are markers of extreme redundancy.}
    \label{fig:decorated}
\end{figure}

\begin{question}[Ancestor extinction]\label{Q:winners}
As a function of $S$ and $k$, what is the distribution of the number of surviving ancestors $A(D)$ in a uniform descendancy diagram?   Give bounds or asymptotics for the expectation $A(S,k)$.
\end{question}

\begin{question}[Extent of redundancy]\label{Q:Gdj}
Define $G(D,j)$ to be the number of final plans with at least $j$ districts in common. How is that $G$ distributed over $\D$ for each $j$?  
\end{question}

For instance, see Figure~\ref{fig:decorated}.  
 Seeing high numbers at low levels is an indicator of repetition.
In this case, there is one district that appears in 11 out of 12 final plans (value 11 appearing in top row).
Worse than that, it is part of a group of 3 districts that appear identically in those 11 plans (value 11 in row $8=k-3$).  Also, 
there is a group of 7 districts that appears in 8 out of 12 final plans (value 8 in row $4=k-7$).  
So for the sample of plans constructed according to this diagram $D$, we have $G(D,3)=11$ and $G(D,7)=8$.  

The questions so far concern the combinatorics of descendancy diagrams, 
but the combinatorics is only one source of redundancy.  It turns out to be compounded by several other factors.  One compounding factor is the non-uniformity of weights in the intermediate generations that are used when the each generation picks its parents.  A second factor is the graph being partitioned, which can start to have bottlenecks obstructing further district selection if the first few have been marked in an unlucky way.  

\begin{question}[Weighting and graph topology]
How do ancestry concentration and redundancy get more severe as the weighting factors deviate from uniform, and when realistic graphs are used as the basis for the partition?
\end{question}

Finally, we broaden the scope and consider the overall quality of the ensemble of $S$ plans that consists of members of the final generation.

\begin{question}[Convergence guarantees]
What are the convergence guarantees for the sampling distribution obtained from the weak SMC Central Limit Theorem \cite[Prop 4.2]{mccartan2023sequential}?  How do they limit the extent of redundancy?
\end{question}

\begin{question}[Summary statistics]
How do the convergence guarantees and diagnostics relate to the production of histograms, boxplots, and other percentile summary statistics?
\end{question}

There are many other elements of the SMC code as implemented in \Redist that pull results away from the simplicity of descendancy diagrams, besides those already mentioned (inter-generational weighting factors and bottlenecks in the graph topology).  
Another example is that SMC creates sequentially labeled districting plans but seeks to sample unlabeled plans; this is addressed with a corrective factor $\psi$ for which a second, auxiliary round of Monte Carlo estimation was described in the paper when sampling plans with $k>13$ districts.\footnote{Some district configurations have many more ways of being sequentially labeled than others; this can produce distortion factors of over a million in practice.  A corrective factor $\psi$ is described in \S4.4.2 of 
\citet{mccartan2023sequential}.}  Another example is a final reweighting step to align the sample more closely with the target distribution.  

The validation datasets employed in the article introducing SMC are graphs with 36 and 50 nodes and $k=3,4,6$ districts; it is not at all clear that some of the challenges faced by SMC will become visible at that scale.  The New York Senate case, in which the judge gave some credence to SMC ensemble made with default settings, had over 16,000 nodes and $k=63$ districts.  

Finally, it is very important to highlight that most of the SMC samples appearing in expert work, as well as the published work of the SMC authors, use small samples. 
 Typically, sample sizes of 5000 are presented based on a single run at $S=5000$ or by combining and subsampling multiple runs with $S=2500$.\footnote{The largest sample sizes we have found in the SMC 50-state data repository are at $S=30,000$ \citep{McCartan_ALARM}.  Our independent attempts to produce large runs  can reach $S=100,000$ on certain states, but it is difficult to get past that size using basic professional-level computer resources.  This is because the \Redist SMC code has significant memory overhead associated with storing partial plans.  Even on a medium-sized problem like redistricting Pennsylvania into $k=18$ districts at the precinct level, a run with $S=100,000$ consumes over 25 GB of RAM.}    
Thus for practical purposes, SMC sampling is currently engineered to be performed with multiple small runs, not with sample sizes in the millions.

Below, we will alternate between trying to isolate the effects of different features of the SMC sampler and simply reporting outcomes when the code is run.  This produces both a theoretical and a practical analysis.

\FloatBarrier

\subsection*{Acknowledgments}

We are grateful to Peter Rock for his excellent work conducting SMC experiments to support this project, building on his development of Python wrappers for the core \Redist functionality.  Replication repo  at \href{https://github.com/mggg/SMC-repetition}{\tt github.com/mggg/SMC-repetition}.
We  thank Cory McCartan for sharing his time to explain SMC for redistricting and the \Redist code, as well as for feedback and a correction on an earlier draft of this note.
We thank Peter Winkler and  Chris Hoffman for helpful conversations and pointers to the literature. 
This material is based upon work supported by the National Science Foundation under Grant No. DMS-1928930 and by the Alfred P. Sloan Foundation under grant G-2021-16778, while the authors were in residence at the Simons Laufer Mathematical Sciences Institute (formerly MSRI) during the Fall 2023 semester. SC is also supported in part by NSF CCF-2104795; MD by NSF DMS-2005512.

\section{Structure of descendancy diagrams}

This section presents theoretical results, derived from the combinatorics of the descendancy diagrams, that allow us to bound the $A(S,k)$ values and begin to explain the expected behavior of collisions. We start by providing a Markov chain formulation for calculating exact values of $A(S,k)$ before providing more computationally efficient bounds on asymptotic behavior in terms of two recursive sequences. We conclude by examining the effect of weighting factors at each layer, proving that uniform choices minimize collisions and evaluating how much the non-uniformity explains the repetitions found in empirical data. 

\subsection{Setup}
We start with some simple observations about this model. When we take a uniform descendancy diagram with two layers (corresponding to an SMC process with three districts), Question~\ref{Q:winners} is a rephrasing of the classic birthday problem from probability. As $k$ grows, the generalized birthday problem also has a combinatorial interpretation as a sequential balls-in-bins model (see \cite{balls_and_bins}) or via random coagulations (see \cite{coagulations}).  Even in the language of ancestry, this has been studied in the context of genetic drift as the {\it Wright-Fisher Model} (see~\cite{durrett2002geneticdrift}). It is well known 
that the probability that two individuals' lineages  remain distinct for at least $i$ levels is $(1-1/S)^i$ and the probability that  $\ell$ lineages remain (pairwise) distinct for at least $i$ levels is  $(1-1/S)^i (1-2/S)^i \ldots (1-(\ell-1)/S)^i$. After renormalizing and sending $S \rightarrow \infty$, the time of the first (pairwise) coalescence among $\ell$ distinct lineages approaches an exponential distribution with rate $\ell(\ell-1)/(2S)$. There is a rich literature considering variations of this model, using it to design Markov chains, and extending it to infinite $S$. 

Our setting differs slightly from the Wright-Fisher model by only considering lineages that extend to the bottom layer and discarding the others---in the language developed above, we only track active nodes. 

\begin{lemma}[One-step probabilities]
\label{lem:onestep}
    If a given generation $i$ has $1\leq t\leq S$ active nodes, then the expected number of  ancestors in the generation immediately above (generation $i+1$) is $S-S(1-\frac 1S)^t$. 
    The probability that there are exactly $v$ activated nodes in generation $i+1$ when there are $t$ activated nodes in generation $i$ is $P(v,t,S) = \binom{S}{v} \sum_{i=0}^v (-1)^{v-i}\binom{v}{i}\left(\frac{i}{S}\right)^t$. 
\end{lemma}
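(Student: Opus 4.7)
The plan is to recognize that the lemma is a restatement of a classical occupancy (balls-in-bins) problem. By the construction of a uniform descendancy diagram, each node at generation $i$ selects a parent in generation $i+1$ independently and uniformly at random from the $S$ nodes there; a node in generation $i+1$ is \emph{active} precisely when it is chosen as a parent by at least one active node of generation $i$. So conditional on $t$ active nodes in generation $i$, the activated set in generation $i+1$ is the set of non-empty bins when $t$ balls are thrown independently and uniformly into $S$ bins.

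For the expectation, I would introduce indicator variables $X_j$ for $j=1,\dots,S$, where $X_j=1$ if node $j$ of generation $i+1$ is chosen by at least one of the $t$ children. Since each child misses a fixed node $j$ with probability $1-1/S$, and the $t$ choices are independent, $\Pr(X_j=0)=(1-1/S)^t$, hence $\E[X_j]=1-(1-1/S)^t$. Linearity of expectation then yields $\E\left[\sum_{j=1}^S X_j\right] = S - S(1-1/S)^t$, which is the first claim.

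For the exact distribution, I would first pick the subset of $v$ activated nodes in generation $i+1$ in $\binom{S}{v}$ ways. Given a fixed such subset $T$, the probability that the $t$ choices land exactly in $T$ and cover all of $T$ is the number of surjections from a $t$-set onto a $v$-set, divided by $S^t$. The number of surjections is the standard inclusion-exclusion count $\sum_{i=0}^{v}(-1)^{v-i}\binom{v}{i}i^t$ (obtained by subtracting off assignments that miss at least one element of $T$). Dividing by $S^t$ and folding the factor $1/S^t$ inside the sum gives $\sum_{i=0}^{v}(-1)^{v-i}\binom{v}{i}(i/S)^t$, and multiplying by $\binom{S}{v}$ yields the announced formula for $P(v,t,S)$.

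The only real obstacle is the sign/bookkeeping in the inclusion-exclusion; I would double-check by verifying the boundary cases $v=1$ (giving probability $S\cdot(1/S)^t = S^{1-t}$, the chance all $t$ children coincide on one parent) and $v=t$ with $t\le S$ (which should reduce to the usual "all choices distinct" probability $\binom{S}{t}t!/S^t$). I would also note that the expectation formula is consistent with the distributional formula as a sanity check via $\E[V]=\sum_v v\, P(v,t,S)$, though carrying out that identity explicitly is unnecessary since the expectation was already established by linearity.
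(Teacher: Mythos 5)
Your proof is correct and follows essentially the same route as the paper's: indicator variables with linearity of expectation for the first claim, and choosing the activated subset in $\binom{S}{v}$ ways followed by inclusion--exclusion (equivalently, counting surjections from the $t$ children onto the $v$ chosen parents) for the second. The boundary-case checks at $v=1$ and $v=t$ are a nice addition but not needed.
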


\begin{proof}
    In generation $i+1$, let $I_j$ be an indicator variable representing node $j$ being chosen at least once. For any individual $1\leq j\leq S$ we have $\P[I_j=1] = 1- (1-\frac 1S)^t$. Then the number of activated nodes is $\sum_{j=1}^S I_j$, and by linearity of expectation its expected value is $S-S(1-\frac 1S)^t$, as desired. 

    The second statement is a birthday problem variant.    For each set of $\binom{S}{v} $ parents the probability that all edges end up in that set is $\left(\frac{v}{S}\right)^t$ and applying inclusion/exclusion to account for versions that don't select every element in that set gives the desired result.
\end{proof}

With this lemma we can can compute the expected value across two or more generations exactly, but the formula does not give much insight, so we omit it.

We can reformulate the problem as a Markov chain on the states $1,2,3,\ldots, s$ representing the number of activated nodes at a given layer. This is an absorbing Markov chain with absorbing state 1, with transition probabilities given by $M_{i,j} = \begin{cases} P(i,j,s)& i \geq j\\0& i <j\end{cases}$, forming a lower-triangular transition matrix. The expectation we seek is $A(S,k)=\begin{bmatrix}0&0& \cdots & 1\end{bmatrix} M^{k-2} \begin{bsmallmatrix}1\\2\\ \vdots\\S\end{bsmallmatrix}$.
For example, when $S=3$, we have 
$M=\begin{bmatrix}
 1&0&0\\
1/3&2/3&0\\
1/9 & 6/9 & 2/9   
\end{bmatrix}$ and for diagrams with two layers ($k=3$) we have that the three nodes have an expected  $19/9=2.111...$ parents.

\subsection{Limiting behavior}

$A(S,k)$ is defined as the expected number of top nodes surviving to the bottom over diagrams $D\in \D(S,k)$.  
First, observe that for fixed $S$, we have $\lim\limits_{k\rightarrow\infty} A(S,k) = 1$. This follows directly from the Markov chain interpretation of the problem, because it is a non-increasing sequence of positive integers with a positive probability of strict decrease at each step while the value is greater than 1.

Given $S$, construct a sequence of coefficients as follows:  $a_{S,0}=1$; and $a_{S,i+1}=1-\left(1-\frac 1S\right)^{(a_{S,i}) S}$. 
Where $S$ is understood to be fixed we will write simply $a_0 =1$, 
$a_{i+1}=1-\left(1-\frac 1S\right)^{a_{i} S}$.
By Lemma~\ref{lem:onestep}, these approximate the share of active nodes at a given level of the diagram:  $a_{i} S \approx A(S,i)$.  We will get a rigorous upper bound below.

Note that as $S$ gets large, $\left(1-\frac 1S\right)^S$ rapidly converges to $1/e$ from below.
With this in mind, we can offer an second approximation with a  sequence given by $b_0=1$; and $b_{i+1}=1-\frac 1e^{b_i}$, which is more likely to have a useful  generating function, if an analytic description is desired. 
Table~\ref{tab:aibi} and Figure~\ref{fig:convergence rate} show the $b_i$ to be close to $a_{S,i}$ for large $S$---and show that $b_i\le A(S,i)\le a_{S,i}$ in all instances we investigated. 

\begin{table}[htb!]
    \centering
{\scriptsize    
    \begin{tabular}{|r||c|c|c|c|c|c|c|c|c|c|c|c|c|c|}
    \hline
$i$&0&1&2&3&4&5&6&7&8&9&10\\
\hline
\hline
$a_{10,i}$    & 1& 0.6513& 0.4965& 0.4073& 0.3490& 0.3077& 0.2769& 0.253& 0.234& 0.2185    &0.2056\\
\hline
$a_{100,i}$  &1& 0.6340& 0.4712& 0.3772& 0.3155& 0.2718& 0.2390& 0.2135&0.1056& 0.1931   & 0.1625\\
\hline
$a_{1000,i}$  &1& 0.6323& 0.4688& 0.3744& 0.3124& 0.2684& 0.2355& 0.2099& 0.1895& 0.1727 & 0.1587\\
\hline
$a_{5000,i}$  & 1& 0.6322& 0.4686& 0.3741& 0.3121& 0.2682& 0.2352& 0.2096& 0.1891& 0.1723 &0.1583\\
\hline
$b_i$         &1& 0.6321& 0.4685& 0.3741& 0.3121& 0.2681& 0.2352& 0.2095& 0.1890& 0.1723   & 0.1582\\
\hline\end{tabular}}
    \caption{Values of $a_{S,i}$ for $S\in \{10,100,1000,5000\}$ and $0 \leq i\leq 10$. As $S$ grows, the $a_{S,i}$ and $b_i$ get close. }
    \label{tab:aibi}
    
\end{table}

\FloatBarrier

\begin{lemma}
\label{lem:ak_limit}
For fixed $S>1$ and $a_i=a_{S,i}$, we have
   $\lim\limits_{i\rightarrow \infty} a_i = \frac1S$.
\end{lemma}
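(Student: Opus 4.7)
The plan is to recognize the recursion as iteration of the map
$f(x) = 1-\bigl(1-\tfrac{1}{S}\bigr)^{xS}$ on $[0,1]$ starting from $a_0=1$, identify $1/S$ as the attracting fixed point in $(0,1]$, and show monotone convergence from above.

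First I would verify that $f$ has the right shape. Writing $c = S\ln(1-1/S)<0$, we have $f(x) = 1-e^{cx}$, so $f'(x) = -c e^{cx} > 0$ and $f''(x) = -c^2 e^{cx} < 0$ on all of $[0,1]$. Hence $f$ is strictly increasing and strictly concave, with $f(0)=0$. A direct substitution shows that $x=1/S$ is a fixed point: $f(1/S) = 1 - (1-1/S)^{1} = 1/S$. Next, applying strict concavity to $g(x) := f(x)-x$ (still strictly concave, being $f$ minus a linear function), $g$ can have at most two zeros in $[0,1]$; since $g(0)=g(1/S)=0$, these are the only zeros. Combined with $g'(0) = -c - 1 > 0$ (using $-\ln(1-1/S) > 1/S$), this gives $g(x)>0$ on $(0,1/S)$ and $g(x)<0$ on $(1/S,1]$.

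Now I would prove by induction that $1/S < a_{i+1} < a_i$ for every $i \ge 0$. For the base step, $a_0 = 1 > 1/S$ (since $S>1$), and since $f$ is strictly increasing with $f(1/S)=1/S$, we get $a_1 = f(1) > f(1/S) = 1/S$; and the sign of $g$ on $(1/S,1]$ gives $a_1 = f(1) < 1 = a_0$. The inductive step is the same argument: if $a_i \in (1/S, 1]$ then $a_{i+1}=f(a_i) > f(1/S) = 1/S$ by monotonicity, and $a_{i+1} < a_i$ because $g(a_i)<0$.

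Finally, the sequence $\{a_i\}$ is monotone decreasing and bounded below, so it converges to some $L \in [1/S, 1)$. By continuity of $f$, $L = f(L)$, so $L$ is a fixed point, and by the uniqueness established in step one, $L = 1/S$.

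I expect no serious obstacle; the only place to be careful is verifying that the sequence stays strictly above $1/S$, which hinges on the monotonicity of $f$ together with $f(1/S)=1/S$, and on distinguishing the two fixed points $0$ and $1/S$, which is precisely what strict concavity gives us.
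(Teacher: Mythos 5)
Your proof is correct. The skeleton matches the paper's: both arguments establish that the sequence stays in $[1/S,1]$, that it is strictly decreasing, and then identify the limit. The difference is in how the last two steps are executed. The paper works with the tangent line to $f$ at $x=1/S$ and the quantitative bound $f'(1/S)<1$ (derived from $1+c<e^c$ with $c=1/(S-1)$); it uses this both to show $a_{i+1}<a_i$ and, via an $\varepsilon$--$\alpha$ contradiction argument, to rule out a limit strictly above $1/S$. You instead exploit the sign of $g(x)=f(x)-x$, pinned down by strict concavity plus the two zeros $g(0)=g(1/S)=0$, to get monotonicity, and then invoke the standard fact that the limit of a convergent orbit of a continuous map is a fixed point, with uniqueness of fixed points in $(0,1]$ again coming from concavity. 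Your closing step is cleaner and more conceptual --- it avoids the explicit $\varepsilon$ bookkeeping entirely --- while the paper's tangent-line estimate is slightly more quantitative (it is essentially a local contraction bound near $1/S$, which could be reused to say something about the rate of convergence). All the individual claims you make check out: $c=S\ln(1-1/S)<0$ gives $f(x)=1-e^{cx}$ strictly increasing and strictly concave, $f(1/S)=1/S$ by direct substitution, $-c>1$ follows from $e^{-1/S}>1-1/S$, and a strictly concave function indeed has at most two zeros, forcing $g<0$ on $(1/S,1]$.
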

 \begin{proof} 
 First, we show by induction that $a_i \geq 1/S$ for all $i$. This is clearly true for $a_0 = 1$.  If $a_i \geq 1/S$, then $a_i S \geq 1$ and 
 \[
 a_{i+1} = 1-\left(1-\frac1S\right)^{a_i S} \geq 1-\left(1-\frac1S\right)^1 = \frac1S. 
 \]
We will also need the following fact, which follows from the inequality $ 1 + c < e^c$ for $c = 1/(S-1) \neq 0$: 
\begin{equation}\label{eqn:S-bound}
    -\left( 1-\frac1S \right) \ln \left(1-\frac1S\right)^S < 1.
\end{equation}

As we know $a_{i+1} = 1-\left(1-\frac1S\right)^{a_iS}$, we will focus on the function $f(x) = 1-\left(1-\frac1S\right)^{xS}$.  We begin by noting $f(\frac1S) = \frac1S$. We also see that 
\[
f'(x) = -\left(1-\frac1S\right)^{Sx} \ln \left(1-\frac1S\right)^S
\]
Equation~\eqref{eqn:S-bound} tells us that $f'(\frac1S) < 1$. 
For $x \geq \frac1S$, the values we are interested in, this slope is always positive (because  $\ln \left(1-\frac1S\right)^S$ is negative) and is strictly decreasing in $x$. This implies for $x > \frac1S$,  $f(x)$ is strictly bounded above by the line tangent to it at $1/S$: 
\[
f(x) < f\left(\frac1S\right) + f'\left(\frac1S\right) \left(x-\frac1S\right) = \frac1S + f'\left(\frac1S\right) \left(x-\frac1S\right)
\]

First, we will show the sequence of $a_i$'s is decreasing. Using $f'(\frac1S) < 1$, we see 
\[
a_{i+1} = f(a_i) < \frac1S + f'\left(\frac1S\right) \left(a_i-\frac1S\right) < \frac1S + a_i - \frac1S = a_i
\]
As the sequence of $a_i$'s is bounded below by $\frac1S$ and strictly decreasing, its limit must exist. 

Suppose, for the sake of contradiction, that the limit of the $a_i$'s is strictly greater than $1/S$, that is, it is $1/S + \alpha$ for some $\alpha > 0$. This means for all $\varepsilon > 0$, there is some sufficiently large $i$ such that $a_i < 1/S + \alpha + \varepsilon$. Choose $\varepsilon$ such that $0 < \varepsilon < \alpha (1-f'(\frac1S))/f'(\frac1S)$. This is possible to do because $\alpha > 0$ and $f'(\frac1S) < 1$. Note this choice of $\varepsilon$ means $f'(\frac1S) (\alpha + \varepsilon) < \alpha$. It follows that: 
\[
a_{i+1} = f(a_i) < \frac1S + f'\left(\frac1S\right) \left(a_i - \frac1S\right) < \frac1S + f'\left(\frac1S\right) (\alpha + \varepsilon) < \frac1S + \alpha.
\]
As this is a monotone decreasing sequence and we assumed its limit was $\frac1S + \alpha$, it is impossible to have $a_{i+1} < \frac1S + \alpha$, giving a contradiction.  Therefore it must be the case that the limit of this sequence is $\frac1S$, as claimed. 
\end{proof}

\begin{proposition}
\label{prop:exact_exp_limit} 
$A(S,k)  \leq   a_{k} S.$
\end{proposition}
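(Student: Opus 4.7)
The plan is to track $\mu_i := \E[L_i]$, the expected number of active nodes at level $i$ of a uniform descendancy diagram, and bound it by induction on $i$. Since every bottom node is trivially active, the base case is $\mu_1 = S = a_0 S$. The goal is to propagate this bound up the diagram via the recursive definition of the $a_j$, so that the top-layer expectation---which is $A(S,k)$---inherits the claimed bound.

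The one-step inequality combines Lemma~\ref{lem:onestep} with Jensen's inequality. The lemma gives the conditional expectation $\E[L_{i+1}\mid L_i = t] = S - S(1-1/S)^t$, so averaging over $L_i$ yields
\[
\mu_{i+1} \;=\; S - S\cdot \E\bigl[(1-1/S)^{L_i}\bigr].
\]
The function $f(x) = (1-1/S)^x$ is convex in $x$, because its second derivative equals $(1-1/S)^x (\ln(1-1/S))^2 > 0$. Jensen's inequality therefore yields $\E[f(L_i)] \ge f(\mu_i)$, which gives $\mu_{i+1} \le S - S(1-1/S)^{\mu_i}$.

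The induction closes by a monotonicity argument. The map $g(x) = S - S(1-1/S)^x$ is strictly increasing in $x$, and the defining recursion for the $a_j$ can be rewritten as $a_{j+1}S = g(a_j S)$. Under the inductive hypothesis that $\mu_i$ is bounded by the appropriate $a_jS$, monotonicity of $g$ combined with the one-step inequality above gives $\mu_{i+1} \le g(\mu_i) \le g(a_jS) = a_{j+1}S$. Iterating from the base case up to the top level then delivers the stated bound on $A(S,k)$.

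The main---and essentially only---obstacle is spotting the correct direction of Jensen's inequality: because $(1-1/S)^x$ is \emph{convex} in $x$, Jensen gives $\E[(1-1/S)^{L_i}] \ge (1-1/S)^{\mu_i}$, and the minus sign sitting in front of this term in the expression for $\mu_{i+1}$ flips this lower bound into the upper bound that we need. Once that is in place, no finer information about the distribution of $L_i$ is required. A sanity check: at $k=3$, the random variable $L_1 \equiv S$ is deterministic, so the Jensen step is tight and the argument recovers $A(S,3) = a_1 S$ exactly---matching the $S=3,\,k=3$ Markov-chain example that gives $19/9$.
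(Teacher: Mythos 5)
Your proposal is correct and follows essentially the same route as the paper's proof: induction on the level, the conditional expectation $S - S(1-\frac1S)^{X_i}$ from Lemma~\ref{lem:onestep}, Jensen's inequality applied to the convex function $x \mapsto (1-\frac1S)^x$, and monotonicity of $x \mapsto S - S(1-\frac1S)^x$ to close the induction (the paper folds this last step into its final chain of inequalities rather than naming it). Your explicit remarks on the direction of Jensen and the $k=3$ sanity check are nice but do not change the substance.
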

\begin{proof} 
Let $X_i$ be a random variable denoting the number of active nodes at  level $i$ (those that have descendants in level $1$).  Thus $X_1\equiv S$. We are trying to get bounds on $\E[X_{k-1}]= A(S,k)$, the expected number of active nodes in the top level of a $k$-district descendancy diagram, which has $k-1$ levels.  

We will prove by induction that $\E[X_i] \le a_i S$, which suffices to prove the proposition.
When $i = 0$, we have  $\E[X_0] = a_0 S=S$, and the statement is true. 
Fix $i \geq 1$, and suppose $\E[X_i] \leq a_i S$. By Lemma~\ref{lem:onestep} and linearity of expectation, we have
$$\E[X_{i+1} \mid X_i] = S - S\left(1-\frac{1}{S}\right)^{X_i}.
$$

We will use the Law of Total Expectation ($\E[X_{i+1}] = \E[\E[X_{i+1} \mid X_i]]$) and  
Jensen's Inequality (for $c > 0$,  $\E[c^X] \geq c^{\E[X]}$).
We get
\begin{align*}
& \E[X_{i+1}] = \E[\E[X_{i+1} \mid X_i]] = \E\left[S - S\left(1-\frac{1}{S}\right)^{X_i}\right] = S-S\E\left[\left(1-\frac{1}{S}\right)^{X_i}\right]   
\\ & \hspace{12mm} \leq S-S\left(1-\frac{1}{S}\right)^{\E[X_i]}   \leq  
S-S\left(1-\frac{1}{S}\right)^{a_i S }  
=  a_{i+1} S. \quad \qedhere
\end{align*} 
\end{proof}

\begin{remark} 
While we only show $a_{k} S$ is an upper bound, for large $k$ it is tight: in the limit as $k \rightarrow \infty$, it matches the trivial lower bound $\E[X_{k}] \geq 1$, which holds because there is at least one active node at each level.
The empirical results, such as those presented in Figure \ref{fig:convergence rate}, suggest the much stronger asymptotic 
$A(S,k)\sim a_{k} S$ as $S,k\to \infty$.  \end{remark}

\begin{figure}[bth!]
    \centering
\begin{tikzpicture}
\node at (0,0)    
{\includegraphics[height=1.6in]{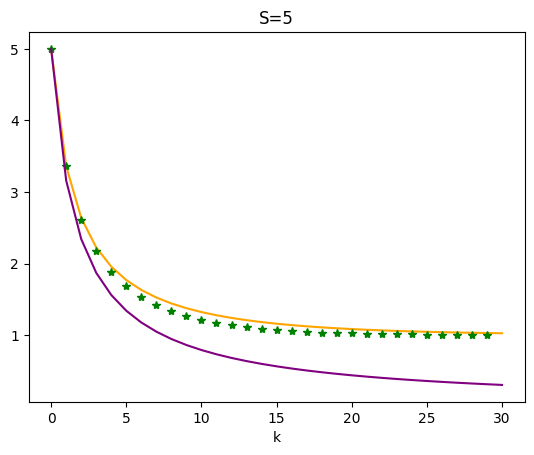}};

\node at (-3,0) [rotate=90] {$A(S,k)$};

\node at (5,0) {\includegraphics[height=1.6in]{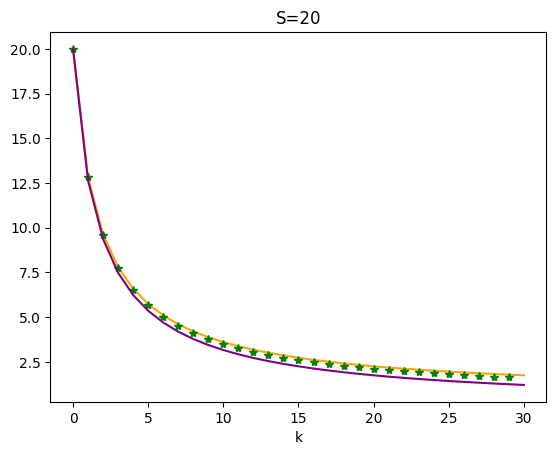}};

\node at (10,0) {\includegraphics[height=1.6in]{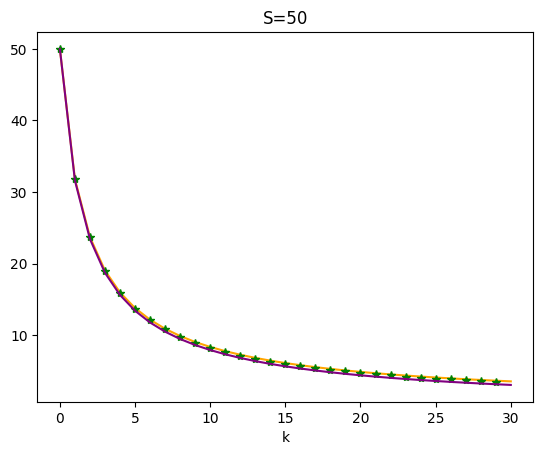}};
\node at (12.2,-2.2) {$k$};
\end{tikzpicture}    
    \caption{If the distribution of weights is uniform, these plots show the expected number of surviving ancestors (districts drawn in the initial generation that appear in the final sample of plans) as $k$ grows, for $S=5,20,50$. The horizontal axis is $k$ in each plot and the vertical axis is the expected number of surviving ancestors. 
 Green stars are precise outputs from the Markov chain expression, compared to the $a_k S$ values in orange and the $b_k S$ values in purple (each interpolated by a curve).  In these small experiments, it is always true that $b_k S \le A(S,k)\le a_k S$, and that $a_k S\approx A(S,k)$ is a very good approximation.}
    \label{fig:convergence rate}
\end{figure}

Table~\ref{tab:k-sizes} shows that 
the predicted repetition (due to combinatorial collision expected under uniform weighting) is already pronounced, but empirical runs on real-world geography show far greater redundancy.  
We include $k=18$ (PA Congress 2010), $k=42$ (NM state House), 
$k=52$ (CA Congress 2020),
$k=63$ (NY State House), and
$k=203$ (PA State House).\footnote{The \Redist software has a built-in feature to warn users about "low plan diversity."  This warning message was triggered in 19/1000 PA Congressional runs, 534/1000 NM House runs, 939/1000 CA Congressional runs, 984/1000 NY House runs, and every PA House run. The message counsels users to "Consider weakening or removing constraints, or increasing the
population tolerance."  It is unclear how this kind of challenge was handled in the expert work cited in this note.
Furthermore, restricting to runs that do not trigger a warning message does not improve the repetition enormously.  In the same trials from Table~\ref{tab:k-sizes}, the max district repetition out of 5000 plans got as high as 2866 (PA-Cong), 4817 (NM-House), 4983 (CA-Cong), and 4622 (NY-House), even among runs with no low-diversity warning.}

\begin{table}[htb!]
    \centering
    \begin{tabular}{|c||c|c|c|c|c|}
    \hline
    & PA & NM &CA& NY & PA \\
    & Congress & House &Congress& House & House \\
    & $k=18$ & $k=42$& $k=52$& $k=63$ &$k=203$\\
    \hline
    \hline

$a_{5000,k}$  
&0.1066 & 0.0466 & 0.0377 &  0.0313 & 0.0099
\\
\hline 
$b_{k}$        
&0.1065 & 0.0464 & 0.0376 & 0.0312 & 0.0098  
\\
\hline
predicted average repetition 
 &\multirow{2}{*}{9.4} & \multirow{2}{*}{21.6} & \multirow{2}{*}{26.6} & \multirow{2}{*}{32.1} & \multirow{2}{*}{102.0}
 \\ 
 {\small $\nicefrac 1a \approx \nicefrac SA$} &&&&&\\
 \hline  \hline 
average district repetition  & \multirow{2}{*}{18.2} & \multirow{2}{*}{121.6} & \multirow{2}{*}{518.6} & \multirow{2}{*}{785.1} & \multirow{2}{*}{4905.0} \\
{\footnotesize (averaged over trials)} &&&&&\\
\hline 
max district repetition& \multirow{2}{*}{498.9} & \multirow{2}{*}{1896.9} & \multirow{2}{*}{3104.8} & \multirow{2}{*}{3350.8} & \multirow{2}{*}{4977.1} \\
{\footnotesize (averaged over trials)} &&&&&\\
\hline 
max district repetition& \multirow{2}{*}{4515} & \multirow{2}{*}{4951} & \multirow{2}{*}{5000} & \multirow{2}{*}{5000} & \multirow{2}{*}{5000}\\
{\footnotesize (max over trials)} &&&&&\\

\hline 
    \end{tabular}

\caption{For several realistic-sized problems, we consider the expected repetition of the initial districts that survive to the final sample.   
We report $1/a$ as predicted average repetition because it is a known bound that is quite tight already for small $S$. We conduct 1000 trials with $S=5000$ for each column; completing one trial with $k=203$ can take up to five days.  The fact that the observed average repetition is appreciably more severe 
than predicted points to the impact of other causes of redundancy, like non-uniform weights, graph bottlenecks, and final reweighting---these have a snowballing impact as the number of districts grows.  These will be explored below.}
    \label{tab:k-sizes}
\end{table}

\begin{remark}
The case of square diagrams is a natural one to consider.
Numerical results suggest that 
$A(S,S)$ limits to a constant slightly greater than 2.
Subsequently, once there are two active nodes in a population of $S$, it takes an expected $S$ more steps for those to collide, leaving a single common ancestor.  This suggests that when $k\approx 2S$, we expect the ancestry to collapse to a single node---one initial district will appear in every plan.  This will be further discussed below in  Table \ref{tab:PC}.
\end{remark}

\FloatBarrier

\subsection{Non-uniform weights}

Above we assumed that each active node chooses its parent uniformly at random.  In practice, this is not how the SMC code works; 
the weights  depend on  graph properties of the partitions.  The weight on node $j$ at level $i$ is given by $w_i^{(j)}=\frac{\left(\tau(G_i^{(j)})\right)^{\rho-1}}{\left|\partial G_i^{(j)}\right|}$, where the numerator is $\tau$, the product of the number of spanning trees  in the pieces of the partial plan $G_i^{(j)}$, raised to a power $\rho -1$, and the denominator is the size of the edge cut.  

\begin{figure}[bht!]
\centering
\includegraphics[width=2.4in]{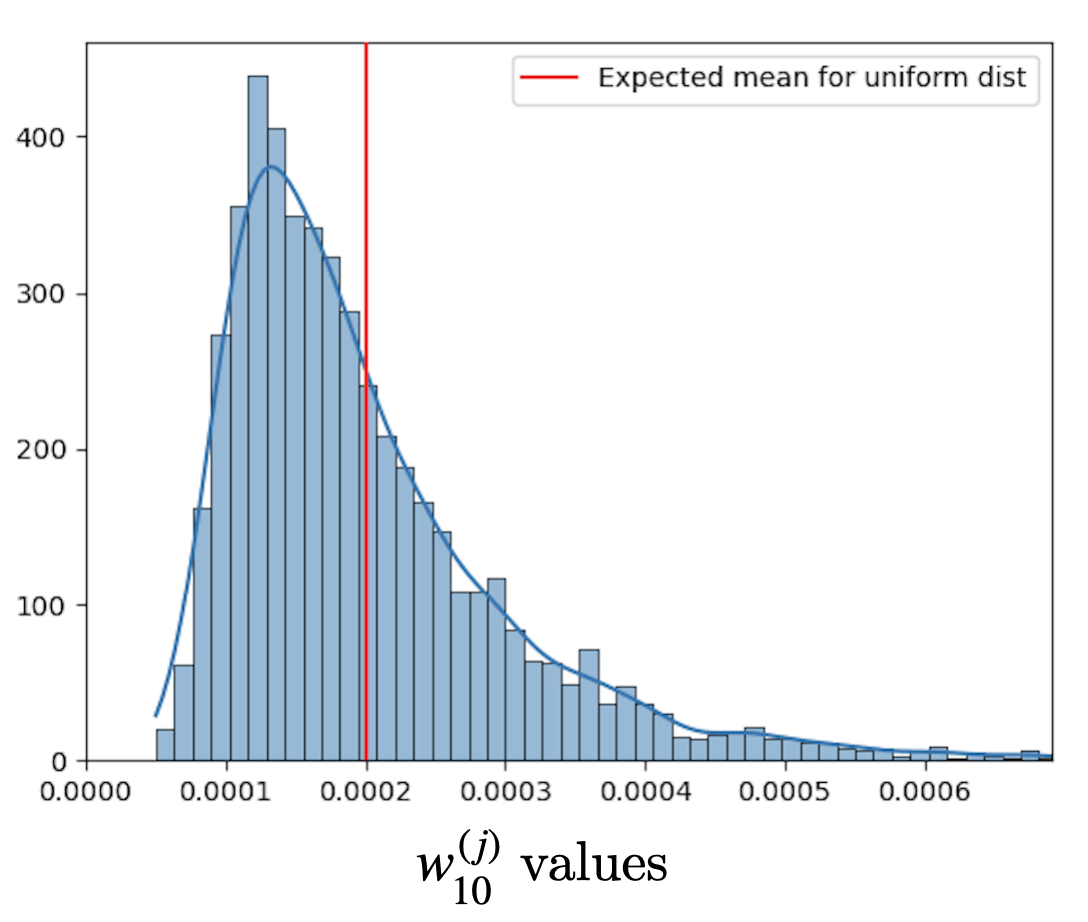}
\caption{Truncation of a long-tailed histogram of weights in a descendancy diagram on state Senate districts in New Mexico  ($k=42, i=10, S=5000$, default settings).  If weights were uniform, the distribution of weights would be concentrated at the red line.  Instead, when drawing the 33rd district in this SMC process, some 32-district partial plans are over 100 times likelier than others to be chosen.}\label{fig:NM-weights}
\end{figure}

When $\rho\neq 1$ these weight factors will give wildly different probability of selection to plans based on their compactness, due to $\tau$ values for districts that can easily differ by $10^{100}$ in realistic problems.\footnote{In particular, $\tau> e^{N}$ for many planar graphs on $N$ vertices (for instance $\tau\sim e^{1.6N}$ for triangular lattices), and Congressional districts might typically contain $N=500$ precincts. This is one reason that validating on a $6\times 6$ grid with $6$ districts is inadequate to see salient effects of scale: in that setting, $\tau$ values for individual districts can only differ by a factor of 15.  See related discussion in \citet[\S5.1]{DeFord2021Recombination}.}  Even at $\rho=1$, plans with longer boundaries will be weighted down. (See Figure~\ref{fig:NM-weights} for an empirical example with $\rho=1$.) These inter-generational weights thus present a compactness-related bias pulling away from uniformity for any choice of parameters.

\FloatBarrier

Next we show that non-uniform weights exacerbate the sample repetition.
Recall that $X_{i}$ was a random variable denoting the number of active nodes at level $i$, where $X_1 \equiv S$ and parents are chosen uniformly at random at each level.  
We now set up our second model by fixing some non-uniform distribution over $S$ nodes at each level $1, \dots, k-2$ for the selection of parents.  Fixing those distributions, we initialize $Y_1\equiv S$, and let $Y_{i}$ be a random variable denoting the number of active nodes at level $i$ with the specified parent selection probabilities. 

\begin{lemma}[Uniform descendancy minimizes ancestor collapse]\label{lem:unif} For $i \geq 2$, let $X_i$ give the count of active nodes at level $i$ randomized over uniform descendancy diagrams, while $Y_i$ is defined instead with the same non-uniform distribution on each level.  Then 
    $$\E[Y_{i} \mid Y_{i-1} = a] \leq \E[X_{i} \mid X_{i-1} = a]. $$
\end{lemma}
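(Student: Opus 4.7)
The plan is to reduce the inequality to a one-step calculation that depends only on the parent-selection distribution at level $i$, and then dispatch it with Jensen's inequality applied to the convex function $x \mapsto (1-x)^a$ on $[0,1]$.

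First I would compute both conditional expectations in closed form, imitating the indicator-variable argument from Lemma~\ref{lem:onestep}. Let $(p_1,\ldots,p_S)$ be the (possibly non-uniform) parent-selection distribution used at level $i$, with $\sum_j p_j = 1$. Conditional on $Y_{i-1} = a$, the $a$ active children at level $i-1$ independently each pick a parent at level $i$ according to this distribution, so parent $j$ is activated with probability $1 - (1-p_j)^a$. Summing over $j$ gives
\[
\E[Y_i \mid Y_{i-1} = a] \;=\; \sum_{j=1}^S \bigl(1-(1-p_j)^a\bigr) \;=\; S - \sum_{j=1}^S (1-p_j)^a,
\]
and the uniform specialization $p_j \equiv 1/S$ recovers $\E[X_i \mid X_{i-1} = a] = S - S(1-1/S)^a$. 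So the claim is equivalent to the deterministic inequality $\sum_j (1-p_j)^a \geq S(1-1/S)^a$.

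Next I would invoke Jensen. The function $f(x) = (1-x)^a$ on $[0,1]$ has $f''(x) = a(a-1)(1-x)^{a-2} \geq 0$ for $a \geq 1$, so $f$ is convex, and therefore
\[
\frac{1}{S}\sum_{j=1}^S (1-p_j)^a \;\geq\; \left(1 - \frac{1}{S}\sum_{j=1}^S p_j\right)^{\!a} \;=\; \left(1-\frac{1}{S}\right)^{\!a}.
\]
Multiplying by $S$ and subtracting from $S$ flips the inequality and delivers exactly $\E[Y_i \mid Y_{i-1}=a] \leq \E[X_i \mid X_{i-1}=a]$. The boundary case $a=1$ is trivial (both sides equal $1$), and $a=0$ does not arise since every level has at least one active node.

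The main point of care, more than a genuine obstacle, is the modeling assumption: I need the parent-selection distribution at level $i$ to be a fixed vector on the $S$ parent nodes that is not itself a function of the identities of the active children. Once that is granted, the conditional expectation depends on $Y_{i-1}$ only through its numerical value $a$, and the one-step calculation above goes through verbatim. Because the inequality is strict whenever the $p_j$ are not all equal and $a \geq 2$, iterating it level by level (e.g.\ by a monotone coupling or by induction combined with Jensen on the outer expectation, as in Proposition~\ref{prop:exact_exp_limit}) will in fact give the stronger global statement that non-uniform weighting only deepens ancestor collapse.
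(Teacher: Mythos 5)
Your proof is correct and follows essentially the same route as the paper's: both reduce the claim to the closed-form identity $\E[Y_i \mid Y_{i-1}=a] = S - \sum_j (1-p_j)^a$ and then to the deterministic inequality $\sum_j (1-p_j)^a \geq S(1-\tfrac1S)^a$. The only difference is cosmetic: you finish with Jensen applied to the convex map $x \mapsto (1-x)^a$, whereas the paper applies H\"older's inequality to the vector with entries $(1-p_j)/(S-1)$, and in this setting the two are interchangeable (your Jensen step is arguably the more direct of the two).
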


\begin{proof}
    Consider the random variable $Y_i$. For $j \in \{1,2, \ldots , S\}$, let $p_j$ denote the probability that an individual at level $i-1$ chooses $j$ as their parent in level $i$ (note the $p_j$'s may also vary with $i$). 
    We will need H\"older's Inequality, which states that for $p,q \in [1, \infty)$ satisfying $1/p + 1/q = 1$ and any two vectors $\mathbf{u}$ and $\mathbf{v}$, $\| \langle \mathbf{u} ,  \mathbf{v} \rangle \|_1 \leq  \| \mathbf{u}  \| _p  \| \mathbf{v} \| _q$. 
    We apply this to the vectors $\mathbf{u}$ where $u_j = (1-p_j)/(S-1)$ and $\mathbf{v} = (1,1,1, \dots , 1)$. Note $$ \| \langle \mathbf{u}, \mathbf{v} \rangle  \| _1 =  \|  \mathbf{u}  \| _1 = \sum_{j = 1}^S \frac{1-p_j}{S-1} = 1.$$ Using $p = a$ and $q = a/(a-1)$, we see that 
    \begin{align*}
      & \| \mathbf{u} \| _a  = \left( \sum_{j = 1}^S \left(  \frac{1-p_j}{S-1} \right)^a \right)^{1/a}  = \left( \frac{1}{(S-1)^a} \sum_{j = 1}^S (1-p_j)^a \right)^{1/a}  \\
      & \| \mathbf{v} \| _{a/(a-1)} = \left( \sum_{j = 1}^S 1^{a/(a-1)} \right)^{(a-1)/a} = S^{(a-1)/a} 
    \end{align*}
    Putting this together with H\"older's inequality, we see that 
$$\left( \frac{1}{(S-1)^a} \sum_{j = 1}^S (1-p_j)^a \right)^{1/a}   S^{(a-1)/a}  \geq 1  \qquad \hbox{\rm and} \qquad 
        \sum_{j = 1}^S (1-p_j)^a \geq \frac{(S-1)^a}{S^{a-1}} = S \left(1 - \frac{1}{S}\right)^a.$$
    We now see that 
    \begin{align*}
        \E[Y_{i} \mid Y_{i-1} = a] &= \sum_{j = 1}^S (1- (1-p_j)^a) = S - \sum_{j = 1}^S (1-p_j)^a \\ &\leq S - S \left(1 - \frac{1}{S}\right)^a = S \left(1-\left(1-\frac1s\right)^a\right) = \E[X_{i} \mid X_{i-1} = a].
    \end{align*}
    This completes the proof. 
\end{proof}

We find  empirically that the weights at each level frequently have a distribution shaped  like the one shown in Figure~\ref{fig:NM-weights}, which was drawn from a run on New Mexico state Senate districts.   In New Mexico, it was common to see max-to-median weight ratios of 10 within a generation, and max-to-min ratios of 30, even with $\rho=1$.  In New York's Senate districts, these ratios were commonly 30 and 2000, respectively.  
As we will see, skews of this kind will tend to significantly increase the collision rate.

\begin{table}[htb!]
    \centering
\begin{tikzpicture}

\node at (4,2.4) {{\em Lowest level with a "mega-ancestor" accounting for $\varphi$ share of the final outputs}};
\node at (4,2) {{\em (Low table entries for high $\varphi$ are signs of extreme redundancy)  }};
\node at (0,0)  {\begin{tabular}{|c||c|c|c|c|}
    \hline
      $F$ & $S=10$&100&1000&5000 \\
     \hline
     \hline
     $\varphi=.01$    & ---& ---& 3.8 & \hcell  15.4 \\
     \hline
     .1&    ---& 5.5&51.1 &\hcell  256.3 \\
     \hline
     .25&  2.5& 18.7&188.3 	&\hcell  957.8\\
     \hline
     .5 & 5.5& 60.2&622.2 	&\hcell  3187.3\\
     \hline
     .75 &14.6&144.7&1433.5& 	7092.4\\
     \hline 
     1& 17.9&201.9&2065.2& 	9966.2\\
     \hline
    \end{tabular}};
\node at (0,-1.9) {Uniform weights};
\node at (8,0)  {\begin{tabular}{|c||c|c|c|c|}
    \hline
     $F$ & $S=10$&100&1000&5000 \\
     \hline
     \hline
     $\varphi=.01$ &--- &--- &2.0&\hcell  2.0\\
     \hline
     .1&---&2.0&4.6&\hcell  78.4  \\
     \hline
     .25&2.0&2.0&19.7&\hcell  320.1\\
     \hline
     .5 &2.0&2.8&65.8&\hcell  1032.7\\
     \hline
     .75 &2.0&5.3&151.7&2401.9\\
     \hline 
     1&2.7&11.1&232.3& 3533.2\\
     \hline
    \end{tabular}};
\node at (8,-1.9) {$100:1:1\dots:1$ weights};

\node at (4,-4)
{\begin{tabular}{|c||c|c|}
\hline 
& NM & NY \\
$F$ & $k=42$ & $k=63$\\
& $S=5000$ & $S=5000$\\
\hline \hline 
$\varphi=.01$&\hcell  3.2 (100\%) &\hcell  2.9 (100\%) \\ \hline 
$.1$ &\hcell  8.8 (100\%) &\hcell  6.9 (100\%)\\ \hline 
$.25$ &\hcell  14.9 (71.4\%) &\hcell  11.9 (99\%) \\ \hline 
$.5$ &\hcell  17.0 (23.5\%) &\hcell  18.8 (74\%) \\ \hline 
\end{tabular}};
\node at (4,-5.9) {Actual runs of SMC code};

\end{tikzpicture}
\caption{$F(D,\varphi)$ reports the lowest level at which some node is an ancestor to $\varphi$ share of the bottom generation.  (This is vacuous if $\varphi S\le 1$.)
The tables show estimated expectations for $F(D,\varphi)$ with uniform weights and with stylized non-uniform weights. Each cell value is obtained by averaging over 1000 trials. 
Reading across the bottom row in the case of uniform weights (top left) confirms that $A(S,2S)\approx 1$.
Collapse is much worse with stylized non-uniform weights (top right). 
However, the actual runs on New Mexico and New York (bottom) show that the empirical repetition can be far greater than either prediction. For instance, the NM cell at $\varphi=.25$ tells us that 714 out of 1000 trials in New Mexico had a node serving as ancestor to 25\% of the final generation, and that node on average occurred at level 14.9 (meaning $25\%$ of the plans had $42-15=27$ districts in common).
}
    \label{tab:PC}
\end{table}

Recall that one goal (Question~\ref{Q:Gdj}) is to measure the distribution of statistics $G(D,j)$ defined as the number of final plans in $D$ with $j$ districts exactly in common.  
A mathematically more natural expression that contains the needed information but is a bit harder to phrase in English is $F(D,\varphi)$, the lowest level at which there is a "mega-ancestor" accounting for $\varphi$ share of the final generation:
$$F(D,\varphi):=\min\{i : \exists\ j ~\hbox{\rm with}~ d(i,j)> \varphi\cdot S\}.$$

\FloatBarrier

Table \ref{tab:PC} shows an simulated comparison of $F(D,\varphi)$ between the case of uniform weights and a simple non-uniform setup where one node at each layer is 100 times more likely to be selected than each of the others (that is, the weights are $100:1:1\dots:1$). Each are run 1000 times.  As we would expect given Lemma~\ref{lem:unif}, the ancestor collapse is accelerated in the non-uniform case.  
But even this significantly understates the actual repetition in 1000 runs to make SMC samples from New Mexico and New York; we can compare this as well to Tapp's expert affidavit finding roughly that $F(D,.6)<31$ for a $S=5000$ sample in New York.  The excess degeneracy in real-world samples is partly because the graph partition step itself can boost repetition; if partial progress has created a hard-to-split remainder, this creates yet another scenario in which a generation may be filled out with repeats.
(This is referred to as graph "bottlenecks" elsewhere in this note.)


\begin{figure}
    \centering
\begin{tikzpicture}
\node at (0,0) {\includegraphics[width=4.6in]{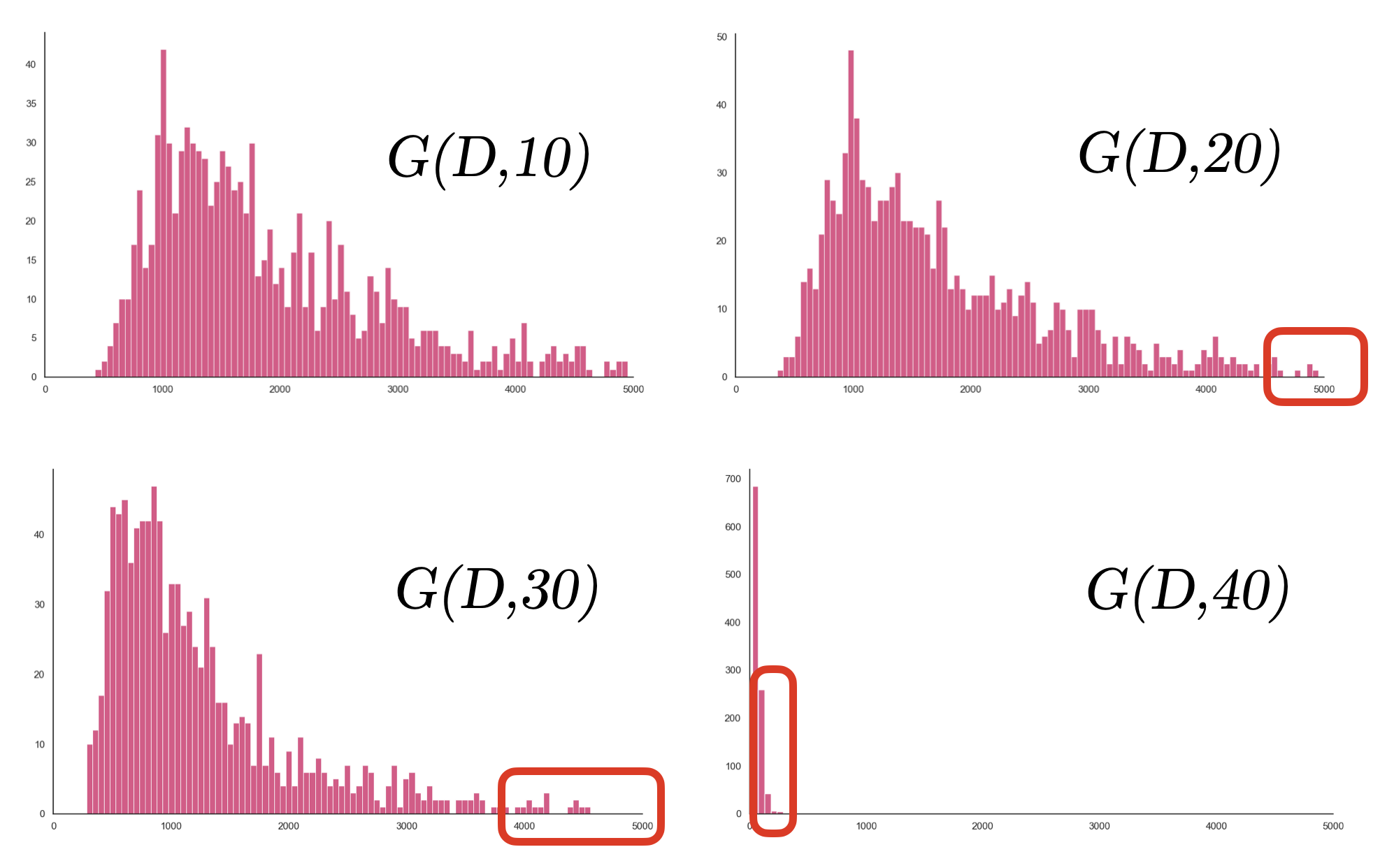}};
\node at (0,-4.2) {Observed $G(D,j)$ for New Mexico runs};

\end{tikzpicture}
\caption{We now recast the  information from the same 1000 SMC runs to spell out the redundancy.  The histograms show how many plans have a set of $j$ districts exactly in common.  A few observations are visible in the marked areas.  On several runs, more than 98\% of outputs shared 20 districts out of 42 exactly in common.  On several runs, more than 90\% of outputs shared 30 districts exactly in common.  And finally, more than 1/4 of the runs have 50 or more plans in their outputs that are nearly identical statewide, sharing 40 out of 42 districts.  These plots are from New Mexico; the effects in New York are still more extreme.}
    \label{fig:GDj}
\end{figure}

Figure~\ref{fig:GDj} turns this around and shows the distribution of $G(D,j)$, the number of plans with $j$ districts in common, for the same 1000 SMC runs on the New Mexico Senate.  
Though the redundancy is striking in New Mexico, it is even worse in New York, the other real-world example we track throughout this note.

\FloatBarrier
\section{Convergence guarantees and diagnostics}

In \cite{mccartan2023sequential}, the main convergence result is a weak central limit theorem stated as follows.

\begin{proposition}[McCartan--Imai Prop 4.2]\label{prop:smc-conv}
 Let $\pi_S = \sum_{j=1}^S w^{(j)} \delta_{[\xi(j)]}$ be the weighted particle approximation generated by [their SMC Algorithm]. Then for all measurable $h$ on unlabeled plans, as $S \to \infty$,
$$\sqrt{S} \left( 
\E_{\pi_S}\left[ h\left([\xi]\right)\right] - 
\E_{\pi}\left[ h\left([\xi]\right)\right]
\right) \xrightarrow{d} \mathcal{N}(0,V_{\rm SMC}(h))$$
for some asymptotic variance $V_{\rm SMC}(h)$.
\end{proposition}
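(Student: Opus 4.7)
The plan is to derive this as an instance of the well-known central limit theorem for Feynman--Kac particle systems, in the form due to Del Moral and Doucet. First, I would recast the SMC algorithm in the standard Feynman--Kac format by introducing the intermediate target distributions $\pi_i$ on partial plans with $i$ districts marked ($i=1,\ldots,k-1$), the Markov kernels $\mathcal{K}_i$ that extend a partial plan by drawing one more district, and the incremental potentials $G_i$ that recover the weights $w_i^{(j)}$. With this dictionary, the SMC Algorithm is exactly the resample--mutate particle approximation of the flow $\pi_1 \mapsto \pi_2 \mapsto \cdots \mapsto \pi_{k-1}$, and $\pi_S$ is the resulting terminal weighted empirical measure.

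Next, I would prove the CLT by induction on $i$, showing that for every bounded measurable $h$, the weighted empirical measure $\pi_S^{(i)}$ at generation $i$ satisfies $\sqrt{S}\bigl(\E_{\pi_S^{(i)}}[h] - \E_{\pi_i}[h]\bigr) \xrightarrow{d} \mathcal{N}(0, V_i(h))$ with an explicit recursion for $V_i$. The base case is the classical CLT for the i.i.d. top generation combined with the delta method to absorb the self-normalization by $\sum_j w_0^{(j)}$. For the inductive step, I would use the standard decomposition of the error into a mutation piece and a resampling/weighting piece: conditional on generation $i$, the children at generation $i+1$ are $S$ independent draws from $\mathcal{K}_i$, so a conditional CLT plus the tower property yields a recursion of the shape
\[
V_{i+1}(h) \;=\; V_i(\mathcal{K}_i h) \;+\; \E_{\pi_i}\bigl[\mathrm{Var}_{\mathcal{K}_i}(h)\bigr],
\]
up to normalization factors involving $\E_{\pi_i}[G_i]$. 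Iterating across all $k-1$ steps expresses $V_{\mathrm{SMC}}(h)$ as the telescoping sum of per-generation contributions, and standard Slutsky-type arguments together with tightness of the weight sums give joint convergence.

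Two subtleties must be addressed. The first is the passage from sequentially labeled plans (on which the algorithm runs) to unlabeled plans (on which $\pi$ lives): I would apply the labeled CLT to the test function $\tilde h(\xi) := \psi(\xi)\, h([\xi])$ built from the ordering-count correction $\psi$, then divide out by the empirical $\psi$-expectation using the continuous mapping theorem to recover a CLT for $\E_{\pi_S}[h([\xi])]$. The second, and in my view the main obstacle, is verifying $L^2$ control of the incremental weights so that $V_{\mathrm{SMC}}(h)$ is finite. Because the state space of plans on a finite graph is itself finite, boundedness holds automatically and the existence of $V_{\mathrm{SMC}}(h)$ is never in doubt; the real difficulty---invisible in the qualitative statement but central to the rest of this note---is that the size of $V_{\mathrm{SMC}}(h)$ is controlled by ratios of maximum to typical weights across generations, which as discussed around Figure~\ref{fig:NM-weights} and Lemma~\ref{lem:unif} can be astronomically large, so the asymptotic rate $S^{-1/2}$ hides a constant that grows quickly with $k$ and with the non-uniformity of the per-generation weight distributions.
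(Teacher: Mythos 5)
You should first note that the paper does not prove this statement at all: it is quoted verbatim from \citet{mccartan2023sequential} as their Proposition 4.2, and is used here only as an imported hypothesis (the authors' own contribution in this direction is the CRS construction in Appendix~\ref{app:CRS}, which shows what the proposition does \emph{not} rule out). So there is no in-paper proof to compare against. That said, your sketch is a faithful reconstruction of the standard route that the cited result itself rests on: recast the sampler as a Feynman--Kac particle approximation, then run the Chopin/Del Moral-style induction over generations with the decomposition of the error into a conditional-CLT mutation term and a resampling/weighting term, yielding the telescoping variance recursion. The two application-specific subtleties you flag are the right ones --- the self-normalization of the weights (handled by the delta method or Slutsky) and the passage from sequentially labeled to unlabeled plans via the correction factor $\psi$ --- and your observation that finiteness of $V_{\rm SMC}(h)$ is automatic on a finite partition space is correct and legitimately disposes of the usual $L^2$ integrability conditions. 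The sketch is not a complete proof (the normalization factors $\E_{\pi_i}[G_i]$ in the variance recursion are waved at, and the joint convergence/tightness step for the weight sums is asserted rather than argued), but the architecture is sound. Your closing remark is worth emphasizing: the proposition is purely qualitative in $S$, and the constant $V_{\rm SMC}(h)$ absorbs the max-to-typical weight ratios across all $k-1$ generations, which is precisely why --- as the body of this note documents empirically --- the $S^{-1/2}$ rate gives essentially no protection against repetition at practically achievable sample sizes.
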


As the authors note, this is convergence in probability rather than almost sure convergence. 
This style of convergence result does not rule out significant sample repetition.\footnote{Indeed, even stronger central limit theorems like those for Markov chains do not rule out this level of repetition, but observed repetition is far less severe for Markov chain methods than for SMC in the redistricting application---and, in any case, they admit far larger samples with current methods.}
To understand the guarantees better, consider the following construction.

\begin{example}[Controlled repetition sampler (CRS)]
Fix a parameter $0<\alpha<1/2$.  
Given a distribution $\pi$ on a state space $\Omega$, let the {\em controlled repetition sampler with parameter $\alpha$} be defined as follows:  a sample of size $S$ is constructed by taking one draw $x\sim \pi$ and adding $x$ to the sample $\lceil S^\alpha \rceil$ times, each with weight $1/S$.  
Next, create a smaller SMC sample of  $S'=S-\lceil S^\alpha \rceil$ plans.  If a plan in this smaller sample is assigned weight $w^{(j)}$ by SMC, we add it to our CRS sample with normalized weight  $w^{(j)}S'/S$.
\end{example}

Despite the amount of repetition present, where up to $\sqrt S$ plans in the sample of $S$ are fully identical, this CRS method still satisfies the conclusions of Proposition~\ref{prop:smc-conv}.  (This is proved in Appendix~\ref{app:CRS}.)  Note that the power $S^\alpha$ could be replaced with any function of $S$ such that $f(S)/\sqrt{S-f(S)}\to 0$.  

Of course, since $\sqrt S /S\to 0$, the first and simplest way to minimize the effects of repetition is to employ very large samples.
When large samples are computationally expensive, one may be tempted to combine multiple separate samples rather than enlarging a single sample.
Indeed, in their small validation example (dividing the $6\times 6$ grid into $k=6$ districts) the SMC authors have averaged 24 independent runs to obtain the estimates shown in \citet[Fig 4]{mccartan2023sequential} rather than enlarging the size of a single sample past $S=10,000$.  
In other published work \citep{McCartan_ALARM}, the same authors and collaborators present ensembles of 5000 maps for all 50 states, and do so in most cases by combining subsamples from multiple SMC runs.
But we know of no theory for this:  it is not clear what balance of the sample size $S$ and the number of separate runs would constitute best practices for users of SMC for redistricting.\footnote{For the illustrative example CRS, the ideal structure would be many samples of size 1.  For SMC, this would fail, because a large $S$ is already needed for the reweighting of the final sample to take the $J$ term in the target distribution into account at all.}

In SMC/\Redist, the main convergence diagnostic  is the Gelman--Rubin $\hat{R}$ statistic, which compares within-sample variance to between-sample variance, typically applied to batches of 2500 or 5000 plans.\footnote{Information on the $\hat{R}$ diagnostic can be found at \cite{Gelman1992Inference,Vehtari2021Rank}.}  
Discussing related issues for MCMC, Charles Geyer \citep{Geyer} wrote of several popular techniques for using many short runs in place of one long run as "worse than useless": they raise your confidence and produce cosmetically better samples while giving no reason to believe the outputs are close to the target.  For now, the computational costs of SMC/\Redist lock users in to a many-short-runs  framework.

\section{Discussion}

Since spanning tree methods were introduced in redistricting around 2018, many authors who need a single exemplar of a plan (such as to serve as the starting point for a Markov chain) obtain one by recursively partitioning a tree down to districts.  One way to view SMC for redistricting is that it constructs the whole sample by running this seeding process many times.  Employing the structure of a descendancy diagram allows the use of weights that help control the properties of the sample, but at the cost of introducing significant combinatorial redundancy.  

\subsection*{Repetition undermines statistical conclusions and data visualization.}
SMC is sometimes claimed to produce nearly independent samples from arbitrary distributions.\footnote{For instance, this was at one point explicit in the \Redist documentation at \url{https://perma.cc/YV37-JZNR}.} 
However, district repetition can create massive dependencies, with many plans being identical on large regions, for the sample sizes currently possible with SMC/\Redist.  Numerous factors that are present in real use cases---non-uniform weights stemming from compactness terms and restricted choices in the decision tree caused by the connection topology of the state, among others---can contribute to extreme repetition, which is visible in plots. 
A highly redundant sample does not allow for reliable outlier analysis because its percentile statistics will often be far from those of the target distribution---for instance, if 20\% of a sample had repeated or identical features, that would manifestly call into question its usefulness to identify 1\% outliers.
To put the same point in visual terms: significant repetition clearly undermines the use of an SMC ensemble to infer the shape of a distribution of summary statistics, such as in histograms and boxplots.  
If statistics such as $\hat{R}$ are used as diagnostics on a histogram, they should be applied to the height of each bar.  A box-and-whiskers plot on Pennsylvania, as shown in \citet[Fig 7]{mccartan2023sequential}, would need 90 $\hat{R}$ calculations (top whisker, third quartile, median, first quartile, bottom whisker for each district).\footnote{See \cite{colorado} for an example where a diagnostic metric, in that case the Kolmogorov--Smirnov statistic, is used in this way to support a box-and-whiskers plot.}

\subsection*{Final reweighting cannot compensate for small samples.} The SMC process leverages importance sampling by constructing an initial sample through a descendancy diagram and then reweighting according to the target distribution $\pi$ only when the sample is complete.  Before the final reweighting, the sample is approximately distributed by the spanning tree distribution $\tau$ on partitions---but somewhat distorted by repetition, labeling bias, and other artifacts of the construction.  The authors intend to use this method to target arbitrary distributions 
$\pi(\xi) \propto \displaystyle e^{-J(\xi)}\tau(\xi)^\rho$, but, in particular, the energy functional $J$ is never used until the descendancy diagram is complete; partitions that are never encountered by the tree-based generation process cannot be rescued by reweighting.  
Thus attempts to target a general $\pi$ with SMC will fare no better than applying one-shot reweighting to previously known methods to sample from $\tau$, such as MCMC.\footnote{If $J$ decomposes into a district-by-district score, then there is a way to take it into account in the intergenerational weights, but this is not done by default and it is not available for general $J$.}
The accuracy depends on emitting a large and diverse sample from the descendancy process.
With samples in the tens of thousands on practical problems,  many legally relevant events will never be observed---by contrast, current Markov chain methods for sampling from $\tau$ can be run to billions of (accepted) steps.  

\subsection*{All-purpose SMC ensembles are unsuited for novel measurements.} Courts have often expressed an interest in the presence of individual districts with particular properties.  For instance, in the litigation challenging the Pennsylvania Congressional plan, the court strongly discouraged the division of Pittsburgh across two districts (and the special master was said to treat it as a "disqualifying feature" of a plan).\footnote{{\em Carter v. Chapman} (2022), see \url{https://www.pacourts.us/assets/opinions/Supreme/out/J-20-2022mo.pdf}.}  
This preference emerged long after the initial expert work had been conducted.
In the New Mexico legal challenge, the parties to litigation debated whether it was disqualifying if any district "contains more than 60\% of the state’s active oil wells."\footnote{{\em Republican Party~vs.~Oliver} (2023), see \href{https://www.nmlegis.gov/Redistricting2021/LitigationDocuments}{\tt nmlegis.gov} links, particularly {\em Plaintiff's Opposed Motion to Exclude Expert Report \& Expert Testimony of Dr.~Jowei Chen} \citep{NM-doc}.}
These examples illustrate that it will frequently be legally relevant to know if some district-level property is common in a neutrally constructed ensemble.  If the most-repeated district in some Pennsylvania SMC run happens to divide Pittsburgh, say, the ensemble can give a highly misleading answer.  
Ensembles constructed with current methodology, including the ALARM ensembles published in {\em Scientific Data} \citep{McCartan_ALARM}, are unsuited for estimating the frequency with which district-level properties occur, beyond those properties tested at the time of data generation.\footnote{In \cite{McCartan_ALARM}, the  $\hat R$ statistic is used on a preset list of metrics:  "Finally, we evaluate the convergence of the algorithm for the specific set of summary statistics described above that are of interest to practitioners."  It is unclear exactly how $\hat{R}$ is being applied.}

\subsection*{All issues compound with large numbers of districts.} Large numbers of districts (large $k$) exacerbate all of the problems described here.\footnote{Even with small numbers of districts, there can be issues:  if few districts have many units, then the inter-generational weights can be highly non-uniform, which will likewise boost repetition.  We suspect this is a real but less severe worry.}   The published validation efforts only go up to $k\le 6$ districts, and the authors themselves have used subdivision to make the problem more tractable---the ALARM 50-state project breaks up Texas ($k=38$), Florida ($k=27$), and California ($k=52$) into three or more pieces, assembling statewide Congressional plans by combining smaller runs on separate regions.\footnote{See \href{https://github.com/alarm-redist/fifty-states/blob/main/analyses/TX_cd_2020/doc_TX_cd_2020.md}{Texas}, \href{https://github.com/alarm-redist/fifty-states/blob/main/analyses/FL_cd_2020/doc_FL_cd_2020.md}{Florida}, and \href{https://github.com/alarm-redist/fifty-states/blob/main/analyses/CA_cd_2020/doc_CA_cd_2020.md}{California} {\sc readme} files from the ALARM 50-state project.}  
Ensembles that have been modularized in this way have an unknown relationship to the target distribution on the full state.

\bigskip

SMC for redistricting is a highly valuable addition to the literature on sampling methods for graph partitioning.  In addition to the theoretical contribution, the \Redist  package that implements SMC is commendably well-documented and user-friendly,  and its authors have designed default settings that allow beginners to complete runs on many full-sized problems.  
Also notably, the main code faced by users of \Redist is in R, a programming language that is popular in social science graduate training.  This has created the conditions for even first-time users of graph algorithms to generate materials for expert reports in court cases across the United States, but without a deep understanding of diagnostics and limitations.\footnote{Experts indicating that SMC/\Redist was their first exposure to graph algorithms, and sometimes even to algorithmic sampling methods more broadly, filed reports not only in New Mexico and New York, but also North Carolina, Pennsylvania, Louisiana, and Texas, and possibly more.  As far as we encountered, they did not discuss convergence diagnostics or  warning messages about sample diversity in their expert reports or filings.}

In this note, we show that the combinatorics of descendancy diagrams combines with a host of other factors to create potentially severe district repetition in SMC samples.  In most cases at the full problem scale of a U.S. state, minimizing the effects of repetition would call for far larger sample sizes than is currently possible.
Issues compound when attempting to target distributions that differ from the spanning tree distribution $\tau$ or when the plans have more than about a dozen districts.
All of these observations counsel caution in using SMC on full-scale redistricting problems.

\bibliographystyle{imsart-nameyear}
\bibliography{GerryRefs}

\begin{thebibliography}{15}

\bibitem[\protect\citeauthoryear{Bertoin}{2006}]{coagulations}
\begin{bbook}[author]
\bauthor{\bsnm{Bertoin},~\bfnm{Jean}\binits{J.}}
(\byear{2006}).
\btitle{Random Fragmentation and Coagulation Processes}.
\bpublisher{Cambridge University Press}.
\end{bbook}
\endbibitem

\bibitem[\protect\citeauthoryear{Cannon et~al.}{2022}]{cannon2022spanning}
\begin{bmisc}[author]
\bauthor{\bsnm{Cannon},~\bfnm{Sarah}\binits{S.}},
  \bauthor{\bsnm{Duchin},~\bfnm{Moon}\binits{M.}},
  \bauthor{\bsnm{Randall},~\bfnm{Dana}\binits{D.}} \AND
  \bauthor{\bsnm{Rule},~\bfnm{Parker}\binits{P.}}
(\byear{2022}).
\btitle{Spanning tree methods for sampling graph partitions}.
\end{bmisc}
\endbibitem

\bibitem[\protect\citeauthoryear{Clelland et~al.}{2021}]{colorado}
\begin{barticle}[author]
\bauthor{\bsnm{Clelland},~\bfnm{Jeanne}\binits{J.}},
  \bauthor{\bsnm{Colgate},~\bfnm{Haley}\binits{H.}},
  \bauthor{\bsnm{DeFord},~\bfnm{Daryl}\binits{D.}},
  \bauthor{\bsnm{Malmskog},~\bfnm{Beth}\binits{B.}} \AND
  \bauthor{\bsnm{Sancier-Barbosa},~\bfnm{Flavia}\binits{F.}}
(\byear{2021}).
\btitle{{Colorado in Context: Congressional Redistricting and Competing
  Fairness Criteria in Colorado}}.
\bjournal{Journal of Computational Social Science}
\bvolume{5}
\bpages{180-226}.
\end{barticle}
\endbibitem

\bibitem[\protect\citeauthoryear{{NM Legislative Defendants}}{2022}]{NM-doc}
\begin{bmisc}[author]
\bauthor{\bsnm{{NM Legislative Defendants}}}
(\byear{2022}).
\btitle{Legislative Defendants' Response to Plaintiffs' Proposed Motion to
  Exclude}.
\bnote{Fifth Judicial District of New Mexico. {\em Republican Party of New
  Mexico v. Maggie Toulouse Oliver} (2022).,
  \url{https://www.nmlegis.gov/Redistricting2021/Litigation\%20Docs/292\%20-\%20September\%2025,\%202023\%20Plaintiff's\%20Opposed\%20Motion\%20to\%20Exclude\%20Expert\%20Report\%20&\%20Expert\%20Testimony\%20of\%20Dr.\%20Jowei\%20Chen.pdf}}.
\end{bmisc}
\endbibitem

\bibitem[\protect\citeauthoryear{DeFord, Duchin and
  Solomon}{2021}]{DeFord2021Recombination}
\begin{barticle}[author]
\bauthor{\bsnm{DeFord},~\bfnm{Daryl}\binits{D.}},
  \bauthor{\bsnm{Duchin},~\bfnm{Moon}\binits{M.}} \AND
  \bauthor{\bsnm{Solomon},~\bfnm{Justin}\binits{J.}}
(\byear{2021}).
\btitle{Recombination: A {Family} of {Markov} {Chains} for {Redistricting}}.
\bjournal{Harvard Data Science Review}
\bvolume{3}.
\bnote{\url{https://hdsr.mitpress.mit.edu/pub/1ds8ptxu}}.
\end{barticle}
\endbibitem

\bibitem[\protect\citeauthoryear{Durrett}{2008}]{durrett2002geneticdrift}
\begin{bbook}[author]
\bauthor{\bsnm{Durrett},~\bfnm{Rick}\binits{R.}}
(\byear{2008}).
\btitle{Probability Models for DNA Sequence Evolution},
\bedition{Second} ed.
\bpublisher{Springer}.
\end{bbook}
\endbibitem

\bibitem[\protect\citeauthoryear{Gelman and Rubin}{1992}]{Gelman1992Inference}
\begin{barticle}[author]
\bauthor{\bsnm{Gelman},~\bfnm{Andrew}\binits{A.}} \AND
  \bauthor{\bsnm{Rubin},~\bfnm{Donald~B.}\binits{D.~B.}}
(\byear{1992}).
\btitle{Inference from Iterative Simulation Using Multiple Sequences}.
\bjournal{Statistical Science}
\bvolume{7}
\bpages{457--472}.
\end{barticle}
\endbibitem

\bibitem[\protect\citeauthoryear{Geyer}{2011}]{Geyer}
\begin{bincollection}[author]
\bauthor{\bsnm{Geyer},~\bfnm{Charles~J.}\binits{C.~J.}}
(\byear{2011}).
\btitle{Introduction to {M}arkov chain {M}onte {C}arlo}.
In \bbooktitle{Handbook of {M}arkov chain {M}onte {C}arlo}.
\bseries{Chapman \& Hall/CRC Handb. Mod. Stat. Methods}
\bpages{3--48}.
\bpublisher{CRC Press, Boca Raton, FL}.
\bmrnumber{2858443}
\end{bincollection}
\endbibitem

\bibitem[\protect\citeauthoryear{Kenny et~al.}{}]{Redist}
\begin{bunpublished}[author]
\bauthor{\bsnm{Kenny},~\bfnm{C.~T.}\binits{C.~T.}},
  \bauthor{\bsnm{McCartan},~\bfnm{C.}\binits{C.}},
  \bauthor{\bsnm{Fifield},~\bfnm{B.}\binits{B.}} \AND
  \bauthor{\bsnm{Imai},~\bfnm{K.}\binits{K.}}
\btitle{{\sf Redist}: Computational algorithms for redistricting simulation.}
\bnote{R Package. \url{https://CRAN.R-project.org/package=redist}}.
\end{bunpublished}
\endbibitem

\bibitem[\protect\citeauthoryear{Liu, Chen and Logvinenko}{2001}]{LCL}
\begin{bincollection}[author]
\bauthor{\bsnm{Liu},~\bfnm{Jun~S.}\binits{J.~S.}},
  \bauthor{\bsnm{Chen},~\bfnm{Rong}\binits{R.}} \AND
  \bauthor{\bsnm{Logvinenko},~\bfnm{Tanya}\binits{T.}}
(\byear{2001}).
\btitle{A theoretical framework for sequential importance sampling with
  resampling}.
In \bbooktitle{Sequential {M}onte {C}arlo methods in practice}.
\bseries{Stat. Eng. Inf. Sci.}
\bpages{225--246}.
\bpublisher{Springer, New York}.
\bmrnumber{1847794}
\end{bincollection}
\endbibitem

\bibitem[\protect\citeauthoryear{McCartan and
  Imai}{2023}]{mccartan2023sequential}
\begin{barticle}[author]
\bauthor{\bsnm{McCartan},~\bfnm{Cory}\binits{C.}} \AND
  \bauthor{\bsnm{Imai},~\bfnm{Kosuke}\binits{K.}}
(\byear{2023}).
\btitle{{Sequential Monte Carlo for sampling balanced and compact redistricting
  plans}}.
\bjournal{The Annals of Applied Statistics}
\bvolume{17}
\bpages{3300 -- 3323}.
\bdoi{10.1214/23-AOAS1763}
\end{barticle}
\endbibitem

\bibitem[\protect\citeauthoryear{McCartan et~al.}{2022}]{McCartan_ALARM}
\begin{barticle}[author]
\bauthor{\bsnm{McCartan},~\bfnm{Cory}\binits{C.}},
  \bauthor{\bsnm{Kenny},~\bfnm{Christopher~T.}\binits{C.~T.}},
  \bauthor{\bsnm{Simko},~\bfnm{Tyler}\binits{T.}},
  \bauthor{\bsnm{Garcia~III},~\bfnm{George}\binits{G.}},
  \bauthor{\bsnm{Wang},~\bfnm{Kevin}\binits{K.}},
  \bauthor{\bsnm{Wu},~\bfnm{Melissa}\binits{M.}},
  \bauthor{\bsnm{Kuriwaki},~\bfnm{Shiro}\binits{S.}} \AND
  \bauthor{\bsnm{Imai},~\bfnm{Kosuke}\binits{K.}}
(\byear{2022}).
\btitle{Simulated redistricting plans for the analysis and evaluation of
  redistricting in the {U}nited {S}tates}.
\bjournal{Scientific Data}
\bvolume{9}
\bpages{689}.
\bdoi{https://doi.org/10.1038/s41597-022-01808-2}
\end{barticle}
\endbibitem

\bibitem[\protect\citeauthoryear{Raab and Steger}{1998}]{balls_and_bins}
\begin{binproceedings}[author]
\bauthor{\bsnm{Raab},~\bfnm{Martin}\binits{M.}} \AND
  \bauthor{\bsnm{Steger},~\bfnm{Angelika}\binits{A.}}
(\byear{1998}).
\btitle{``{B}alls into {B}ins'' --- {A} Simple and Tight Analysis}.
In \bbooktitle{Randomization and Approximation Techniques in Computer Science}
(\beditor{\bfnm{Michael}\binits{M.}~\bsnm{Luby}}, \beditor{\bfnm{Jos{\'e}
  D.~P.}\binits{J.~D.~P.}~\bsnm{Rolim}} \AND
  \beditor{\bfnm{Maria}\binits{M.}~\bsnm{Serna}}, eds.)
\bpages{159--170}.
\bpublisher{Springer Berlin Heidelberg}, \baddress{Berlin, Heidelberg}.
\end{binproceedings}
\endbibitem

\bibitem[\protect\citeauthoryear{Tapp}{2022}]{Tapp}
\begin{bmisc}[author]
\bauthor{\bsnm{Tapp},~\bfnm{Kristopher}\binits{K.}}
(\byear{2022}).
\btitle{Second {A}ffidavit of {D}r. {K}ristopher {R}. {T}app, {P}h{D}}.
\bnote{Supreme Court of the State of New York. {\em Harkenrider v. Hochul}
  (2022). \url{https://perma.cc/29X3-59CX}}.
\end{bmisc}
\endbibitem

\bibitem[\protect\citeauthoryear{Vehtari et~al.}{June 2021}]{Vehtari2021Rank}
\begin{barticle}[author]
\bauthor{\bsnm{Vehtari},~\bfnm{Aki}\binits{A.}},
  \bauthor{\bsnm{Gelman},~\bfnm{Andrew}\binits{A.}},
  \bauthor{\bsnm{Simpson},~\bfnm{Daniel}\binits{D.}},
  \bauthor{\bsnm{Carpenter},~\bfnm{Bob}\binits{B.}} \AND
  \bauthor{\bsnm{Bürkner},~\bfnm{Paul-Christian}\binits{P.-C.}}
(\byear{June 2021}).
\btitle{Rank-Normalization, Folding, and Localization: An Improved $\hat{R}$
  for Assessing Convergence of {MCMC} (with Discussion)}.
\bjournal{Bayesian Analysis}
\bvolume{16}.
\end{barticle}
\endbibitem

\end{thebibliography}

\clearpage 

\appendix

\section{Weak CLT for controlled repetition sampler}
\label{app:CRS}

\begin{proposition}
For any $0<\alpha<1/2$, the controlled repetition sampler with parameter $\alpha$ satisfies 
$$\sqrt{S} \left( 
\E_{\pi_S}\left[ h\left([\xi]\right)\right] - 
\E_{\pi}\left[ h\left([\xi]\right)\right]
\right) \xrightarrow{d} \mathcal{N}(0,V_\alpha(h))$$ for any measurable function $h$, where $V_\alpha$ is an asymptotic variance.
\end{proposition}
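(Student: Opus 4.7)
The plan is to decompose $\E_{\pi_S}[h]$ into its two pieces---the $m := \lceil S^\alpha\rceil$ identical copies of the single draw $x\sim \pi$, and the genuine SMC sub-sample of size $S' := S - m$---and then push Prop.~\ref{prop:smc-conv} through Slutsky's theorem. Because the $m$ copies each carry weight $1/S$ while the SMC particles carry weight $w^{(j)}S'/S$, the CRS weights sum to $1$ and one has $\E_{\pi_S}[h] = (m/S)\,h(x) + (S'/S)\,\E_{\pi_{S'}^{\rm SMC}}[h]$. Subtracting $\E_\pi[h]$ and multiplying by $\sqrt{S}$ gives the clean decomposition
\[
\sqrt{S}\bigl(\E_{\pi_S}[h]-\E_\pi[h]\bigr)
\;=\; \frac{m}{\sqrt{S}}\bigl(h(x)-\E_\pi[h]\bigr)
\;+\; \sqrt{\tfrac{S'}{S}}\;\sqrt{S'}\bigl(\E_{\pi_{S'}^{\rm SMC}}[h]-\E_\pi[h]\bigr).
\]

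For the first summand, $\alpha<1/2$ forces $m/\sqrt{S}= S^{\alpha - 1/2}(1+o(1))\to 0$, while $h(x)-\E_\pi[h]$ is a single random variable (square-integrability of $h$ under $\pi$ is implicit in $V_{\rm SMC}(h)<\infty$), so this contribution tends to $0$ in probability. For the second summand, $S'/S = 1 - m/S\to 1$ gives $\sqrt{S'/S}\to 1$, and Prop.~\ref{prop:smc-conv} applied to the independent SMC run at sample size $S'\to\infty$ delivers $\sqrt{S'}(\E_{\pi_{S'}^{\rm SMC}}[h]-\E_\pi[h]) \xrightarrow{d} \mathcal{N}(0,V_{\rm SMC}(h))$. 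One application of Slutsky promotes the second summand to this same limit, and a second application combines it with the vanishing first summand, yielding the proposition with $V_\alpha(h) = V_{\rm SMC}(h)$.

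The draw $x$ and the SMC particles can be realized on independent probability spaces, so no coupling between the two summands needs to be addressed. Beyond Prop.~\ref{prop:smc-conv}, no new technical machinery is required---the argument is essentially bookkeeping, and the main \emph{obstacle} would only surface if $h$ were too poorly integrable for the first summand to be $o_p(1)$, which is already ruled out by the finiteness of the asymptotic variance. The same reasoning goes through with $\lceil S^\alpha \rceil$ replaced by any $f(S)$ satisfying $f(S)/\sqrt{S-f(S)}\to 0$, consistent with the remark in the main text.
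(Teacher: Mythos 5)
Your proposal is correct and follows essentially the same route as the paper's proof: the identical decomposition of $\sqrt{S}\bigl(\E_{\pi_S}[h]-\E_\pi[h]\bigr)$ into a vanishing term from the $\lceil S^\alpha\rceil$ repeated copies plus $\sqrt{S'/S}$ times the rescaled SMC error. The only difference is cosmetic---you invoke Slutsky's theorem where the paper carries out the equivalent argument by hand on the CDFs (showing $\Pr(Y'_S\le b_S)\to\Pr(Z\le a)$ for $b_S\to a$).
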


\begin{proof}

We will use $\pi_{S'}$ to denote the distribution for the SMC sample of size $S'$ used above (before it is reweighted by $S'/S$), and $\pi_S$ to denote our complete sample of size $S$, where 
\[\pi_S = \frac{\lceil S^{1/3} \rceil}{S} \delta_{[\xi_0]} + \frac{S'}{S}\pi_{S'} =  \frac{\lceil S^{1/3} \rceil}{S} \delta_{[\xi_0]} + \sum_{j = 1}^{S'} \frac{S'}{S} w^{(j)} \delta_{[\xi(j)]}.\]
Let $h$ be any measurable function on unlabelled plans. 
We know, from Proposition~\ref{prop:smc-conv} applied to $\pi_{S'}$, that as $S \rightarrow \infty$ (which also implies $S' \rightarrow \infty)$, 
\[ \sqrt{S'} \left( 
\E_{\pi_{S'}}\left[ h\left([\xi]\right)\right] - 
\E_{\pi}\left[ h\left([\xi]\right)\right]
\right) \xrightarrow{d} \mathcal{N}(0,V_{\rm SMC}(h))\]
We define a random variables $Y'_{S} := \sqrt{S'} \left( 
\E_{\pi_{S'}}\left[ h\left([\xi]\right)\right] - 
\E_{\pi}\left[ h\left([\xi]\right)\right]
\right)$, and let $Z \sim \mathcal{N}(0, V_{SMC})$.  We know for any $a$, \[
\lim_{S \rightarrow \infty} \Pr(Y'_{S} \leq a) = \Pr(Z \leq a). 
\]
Because $Z$ is a continuous random variable and thus $\Pr(Z = a) = 0$, it's also true that for any sequence $b_{S}$ with $\lim_{S \rightarrow \infty} b_{S} = a$, 
\[
\lim_{S \rightarrow \infty} \Pr(Y'_{S} \leq b_{S}) = \Pr(Z \leq a). 
\]

Let $Y_S : = \sqrt{S} \left( 
\E_{\pi_{S}}\left[ h\left([\xi]\right)\right] - 
\E_{\pi}\left[ h\left([\xi]\right)\right] \right)$, and note that we are trying to show the sequence  $Y_S$ converges in distribution to $\mathcal{N}(0, V_{SMC})$. We see that 
\[
\E_{\pi_S} \left[ h\left([\xi]\right)\right] = \frac{\lceil S^{1/3} \rceil}{S} h([\xi_0]) + \frac{S'}{S} \E_{\pi_{S'}}\left[ h\left([\xi]\right)\right].
\]
Using that we can rewrite $\E_{\pi}\left[ h\left([\xi]\right)\right] = \frac{\lceil S^{1/3} \rceil}{S} \E_{\pi}\left[ h\left([\xi]\right)\right] + \frac{S'}{S} \E_{\pi}\left[ h\left([\xi]\right)\right]$,
 it follows that 
\begin{align*}
Y_S &= \sqrt{S} \left( 
 \frac{\lceil S^{1/3} \rceil}{S} h([\xi_0]) + \frac{S'}{S} \E_{\pi_{S'}}\left[ h\left([\xi]\right)\right] - 
\E_{\pi}\left[ h\left([\xi]\right)\right] \right)
\\&=  \frac{\lceil S^{1/3} \rceil}{\sqrt{S}} h([\xi_0]) 
- \frac{\lceil S^{1/3} \rceil}{\sqrt{S}}  \E_{\pi}\left[ h\left([\xi]\right)\right]
+ \frac{S'}{\sqrt{S}} \left( \E_{\pi_{S'}}\left[ h\left([\xi]\right)\right] 
- \E_{\pi}\left[ h\left([\xi]\right)\right] \right)
\\&= \frac{\lceil S^{1/3} \rceil}{\sqrt{S}} \left( h([\xi_0]) - \E_{\pi}\left[ h\left([\xi]\right)\right] \right)  
+ \sqrt{\frac{S'}{S}} \cdot \sqrt{S'} \left( \E_{\pi_{S'}}\left[ h\left([\xi]\right)\right] 
- \E_{\pi}\left[ h\left([\xi]\right)\right] \right)  
\\&= \frac{\lceil S^{1/3} \rceil}{\sqrt{S}} \left( h([\xi_0]) - \E_{\pi}\left[ h\left([\xi]\right)\right] \right)  + \sqrt{\frac{S'}{S}} Y'_{S}
\end{align*}

If we consider $\Pr(Y_S \leq a)$ for any $a$, we see that 
\begin{align*}
    \Pr\left(Y_S \leq a\right)) 
    &= \Pr\left( \frac{\lceil S^{1/3} \rceil}{\sqrt{S}} \left( h([\xi_0]) - \E_{\pi}\left[ h\left([\xi]\right)\right] \right)  + \sqrt{\frac{S'}{S}} Y'_{S'} \leq a
    \right)
    \\&= \Pr \left( Y'_{S} \leq \sqrt{\frac{S}{S'}} \left(a + \frac{\lceil S^{1/3} \rceil}{\sqrt{S}} \left(  \E_{\pi}\left[ h\left([\xi]\right)\right] - h([\xi_0]) \right) \right) \right)
\end{align*}
Note for $b_{S} := \frac{\sqrt{S}}{\sqrt{S'}} \left(a + \frac{\lceil S^{1/3} \rceil}{\sqrt{S}} \left(  \E_{\pi}\left[ h\left([\xi]\right)\right] - h([\xi_0]) \right)\right)$, we see that $\lim_{S \rightarrow \infty} b_{S} = a$, as $\sqrt{S/S'}$ goes to one and $\frac{\lceil S^{1/3} \rceil}{\sqrt{S'}}$ goes to 0. We conclude that for all $a$, 
\[
 \lim_{S \rightarrow \infty} \Pr(Y_S \leq a) =  \lim_{S \rightarrow \infty} \Pr(Y'_{S} \leq b_S) = \Pr(Z \leq a),
\]
and so $Y_S$ converges in distribution to $\mathcal{N}(0, V_{\alpha})$, as claimed.  Note this result would hold with $S^{1/3}$ replaced by $S^\alpha$ for any $\alpha < 1/2$.

\end{proof}

\end{document}